\documentclass[11pt]{article}

\usepackage{ulem}
\usepackage{amssymb,amsbsy,amsmath,amsfonts,amssymb,amscd,amsthm}
\usepackage{cancel}
\usepackage{xcolor}
\usepackage{bbm}
\usepackage{graphicx} 
\usepackage{multicol}
\graphicspath{{./}{immagini/}} 
\usepackage{tikz}
\usetikzlibrary{matrix}

\newcommand{\norm}[1]{\|#1 \|}

\newcommand{\R}{\mathbb{R}}

\newcommand{\N}{\mathbb{N}}
\newcommand{\T}{\mathbb{T}}

\newcommand{\mP}{\mathcal P}

\newcommand{\beq}{\begin{equation}}
\newcommand{\eeq}{\end{equation}}
\def\a{\alpha}

\def\d{\delta}
\def\D{\Delta}

\def\l{\lambda}
\def\L{\Lambda}
\def\m{\mu}
\def\n{\nu}
\def\r{\rho}

\def\o{\omega}

\def\G{\Gamma}

\def\th{\theta}

\def\e{\varepsilon}

\def\pd{\partial}
\def\half{\frac{1}{2}}

\newcommand{\cH}{{\cal H}}

\newcommand{\cK}{{\cal K}}

\newcommand{\cM}{{\cal M}}

\newcommand{\cP}{{\cal P}}


%

\newcommand{\Id}{\mathop{Id}}

\newcommand{\diver}{{\rm div}}

\newcommand{\bu}{\bar u}
\newcommand{\bm}{\bar m}

\DeclareMathOperator{\Wass}{\mathbf{d}_1}
\DeclareMathOperator{\Wassgen}{{\mathbf{d}}_{\mathcal{M}}}
\newtheorem{theorem}{Theorem}[section]
\newtheorem{lemma}[theorem]{Lemma}

\newtheorem{proposition}[theorem]{Proposition}

\newtheorem{remark}[theorem]{Remark}

\numberwithin{equation}{section}

\begin{document}

\title{On quasi-stationary  Mean Field Games of Controls}
 
\author{Fabio Camilli\footnotemark[1]  \and  Claudio Marchi\footnotemark[2]}

\date{version: \today}
\maketitle
\footnotetext[1]{Dip. di Scienze di Base e Applicate per l'Ingegneria,  Universit{\`a}  di Roma  ``La Sapienza", via Scarpa 16, 00161 Roma, Italy  ({\tt  fabio.camilli@uniroma1.it}).}
\footnotetext[2]{Dip. di Ingegneria dell'Informazione \& Dip. di Matematica ``Tullio Levi-Civita'', Universit\`a di Padova, via Gradenigo 6/B, 35131 Padova, Italy ({\tt claudio.marchi@unipd.it}).}

\begin{abstract}
In Mean Field Games of Controls,  the dynamics of the single agent is influenced not only by the distribution of the agents, as in the classical theory, but also by the distribution of their optimal strategies. In this paper, we study quasi-stationary Mean Field Games of Controls, in which the strategy-choice mechanism of the agent is different from the classical case: the generic agent cannot predict the evolution of the population, but chooses its strategy only on the basis of the information available at the given instant of time, without anticipating. We prove existence and uniqueness for the solution of the corresponding quasi-stationary Mean Field Games system  under different sets of hypotheses and we provide some examples of models  which fall within these hypotheses.
\end{abstract}
	
\noindent
{\footnotesize \textbf{AMS-Subject Classification:} 35Q91, 49N70, 35B40, 35Q89}.\\
{\footnotesize \textbf{Keywords:} Mean field games, Quasi-stationary models, Nonlinear coupled PDE systems, Nash equilibria}.

\section{Introduction}
The Mean Field Games (MFG in short) theory concerns differential games for a large population  where the strategies of the agents are affected by the state distribution of the other players through a mean field effect. The corresponding MFG system, composed of a Hamilton-Jacobi-Bellman (HJB in short) equation and a Fokker-Planck (FP in short) equation, characterizes the Nash equilibrium for the population, i.e. the best strategy for the agents if all the others keep their choice. In this theory, the individual is assumed to be able to forecast the behaviour of the population at any later time, a somewhat restrictive assumption for some models such as pedestrian motion. In \cite{mouzouni}, it is  considered a different strategy mechanism: the agent assumes that the environment is immutable and, at each instant, it decides its behaviour only on the basis of the information available at the current time without anticipating the future. This leads to study a class of quasi-stationary MFG systems, where a stationary HJB  equation is coupled with an evolutive FP equation.\\
Recently, in \cite{Card_Lehalle,Carmona_Lacker,gomes_patrizi_voskanyan,Kobeissi1,Kobeissi2,lauriere_tangpi}, it has been introduced a significant generalization of the MFG model, called MFG of Controls, where the strategies of the agents depend not only on the position of other players but also on their strategy. The corresponding MFG system, with respect to the classical one, involves an additional fixed-point equation for the joint distribution of states and controls.\\
Aim of this paper it to extend the model developed in \cite{mouzouni} to Mean Field Games  of Controls. Hence, we deal with the following system
\begin{equation}
\label{QSS_discount} 
\left\{
\begin{aligned}
&- \D u + H(x,Du ; \m(t))+\r u =0 \quad &\mbox{ in } \T^d,\,\forall t\in [0,T]\\
&\partial_{t}m -  \D m -\diver(m  H_{p}(x,Du ; \m))=0&\mbox{ in } Q\\
&\m (t)=\left( \Id,\a^*(\cdot,Du(t);\m(t))\right) \sharp m (t)& \mbox{ in } [0,T] \\
& m (0)=m_{0}& \mbox{ in } \T^d, 
\end{aligned}
\right.
\end{equation}
and the corresponding ergodic version
\begin{equation}
\label{QSS_ergodic} 
\left\{
\begin{aligned}
&- \D u+ H(x,Du; \m(t))+\l=0 \quad &\mbox{ in } \T^d,\,\forall t\in [0,T]\\
&\partial_{t}m- \D m-\diver(m H_{p}(x,Du; \m))=0   &\mbox{ in } Q \\
&\m (t)=\left( \Id,\a^*(\cdot,Du(t);\m(t))\right) \sharp m(t) & \mbox{ in } [0,T] \\
&  u(0,t)=0 &  \mbox{ in } [0,T] \\
& m(0)=m_{0} & \mbox{ in } \T^d.
\end{aligned}
\right.
\end{equation} 
Here $\T^d$ denotes the unit torus in~$\R^d$, $Q=\T^d \times(0,T)$, $\rho>0$ and $f\#m$ stands for the push-forward of the measure $m$ by the function $f$; $Du$ and $\Delta u$ denote the gradient and respectively the Laplacian of the function~$u$ w.r.t. the variable~$x$ while $H_p$ is the gradient of $H=H(x,p;\mu)$ w.r.t.~$p$ (and we refer below for the precise definition of~$\a^*$). Moreover the function $m_{0}$, with $m_0\ge 0, \int_{\T^d} m_0(x)dx=1$, represents the initial distribution of players, $u$ is the value function of the single agent, $\lambda\in\R$ the ergodic cost and it can be interpreted as a long run average cost (see, for instance, \cite{AB}), $\mu$ the distribution  of the pair state-control and $m$ its first marginal (and amounts to the distribution of players). Denoted with $A$  the space of controls  and with $\mP(\T^d\times A)$  the space of probability measures on $\T^d\times A$, we consider the Hamiltonian $H:\T^d \times \R^d\times \mP(\T^d\times A)\to\R$ defined by
\begin{equation}\label{eq:hamiltonian}
H(x,p;\mu)=\sup_{a\in A}\left\{ -p\cdot b(x,a;\mu)-\ell(x,a;\mu)\right\}.
\end{equation}
Let us briefly recall how this operator arises in optimal control theory. For $\m(t)$ fixed, any single agent has a dynamic obeying to
\[
dx_s=b(x_s,a_s;\m(t)) dt + \sqrt{2} dW_s\qquad \textrm{in }(0,\infty),\qquad x_0=x
\]
and aims to choose an admissible control~$a_\cdot$ so to minimize the cost
\[
{\mathbb E}\int_0^\infty e^{-\rho s}\ell(x_s,a_s;\m(t))\, ds
\]
(here, $W$ is a $d$-dimensional Brownian motion and ${\mathbb E}$ denotes the expectation). It is well known (see \cite{AB} and references therein) that the value function associated to this optimal control problem solves the HJB equation in~\eqref{QSS_discount}.
In particular, the HJB equation in~\eqref{QSS_discount} is affected only by the value of~$\m$ at time~$t$; in other words, the single agent chooses its strategy ``freezing'' $\m$ at the present time~$t$ without forecasting the future evolution of~$\m$.\\
In fact, the first  equations in the  systems~\eqref{QSS_discount} and~\eqref{QSS_ergodic}  represent two families of stationary partial differential equations parametrized in the time variable, where the dependence on~$t$ is only through the measure $\mu$. In particular, these systems loss the standard forward-backward structure of MFG systems.\\
Let $\a^*=\a^*(x,p;\m)$ be a map (defined in assumption~$(H2)$ below) which associates to $(x,p)$ and $\m$ the corresponding optimal  control, i.e.
\begin{equation}\label{eq:optimal_control}
b(x,\a^*;\mu)=-H_p(x,p;\mu).
\end{equation}
In the equilibrium condition, at each instant~$t$, $m(t)$ is a measure on~$\T^d$ and $\mu(t)$ is the image of $m(t)$ by the map $x\mapsto (x,\a^*(x,Du(x,t);\mu(t)))$; this feature is expressed by the fixed-point relation given by the third equation in the  previous systems.\\
To show existence of a solution to \eqref{QSS_discount}, it is crucial  to have some regularity in time for the value function $u$. Actually, in the classical setting  of MFG theory, this regularity follows by the parabolicity of the HJB equation (for instance, see \cite{cardaliaguet_notes}). In the quasi-stationary setting of   \cite{mouzouni}, it is obtained  in two steps: first, the author obtains, via a continuous dependence estimate, a bound on the $C^2(\T^d)$-norm of $u(t)-u(s)$ by means of the Wasserstein distance $\Wass(m(t),m(s))$ (recall that in~\cite{mouzouni} the Hamiltonian depends on~$m$ and not on~$\mu$). Afterwards, using the FP equation, the latter quantity is bounded by $|t-s|^\half$ and  the regularity in time of $u$ follows. In the setting of the present paper, namely for quasi-stationary MFG of Controls, a similar argument fails. Indeed,  also in this case, we can apply some continuous dependence estimates and bound   the $C^2(\T^d)$-norm of $u(t)-u(s)$ by means of the Wasserstein distance $\Wass(\m(t),\m(s))$.  The aforementioned procedure fails when one tries to evaluate the latter quantity exploiting the equation for $\m$ because it obtains an estimate which depends on the $L^\infty$-norm of $Du(t)-Du(s)$, i.e. exactly the quantity we want to estimate. \\
To bypass the previous difficulty and recover some regularity property in time for the solution of the HJB equation, we  introduce two different settings:\par
1) In the first case, we use a continuous dependence estimate for the HJB equation and we assume that the constants that intervene in the regularity of the   Hamiltonian with respect to the joint distribution are small. Similar assumptions of  smallness also appear in other papers on MFG of Controls (see \cite{Bongini_Salvarani,Graber_Mayorga}). Note that this smallness assumption does not concern the length of the time interval. \par
2) Otherwise, we assume   a stability property of the Hamiltonian with respect to the joint distribution. This assumption allows us to obtain continuity in time of the gradient of the value function, uniformly with respect to $\mu$. A similar idea was exploited in \cite{Card_Lehalle}.\\
From a modeling point of view, the former setting corresponds to require that the mean field, given by the joint distribution, has a moderate influence on the behaviour of the single agent. In the latter case, we show that the assumption is satisfied if the agent, although it decides the strategy moment by moment, has also some knowledge of the past evolution of the joint measure (memory effect).
However, when the Hamiltonian has a separated dependence on the joint distribution, taking advantage of some specific properties of the HJB equation, we can drop both these assumptions.\\
In both settings, we first prove existence of a solution for system~\eqref{QSS_discount} and find several estimates; afterwards, exploiting such estimates, we obtain a solution to system~\eqref{QSS_ergodic} letting $\rho\to 0^+$. All our existence results for~\eqref{QSS_discount} (in Theorems~\ref{thm:existence_laxmilgram},~\ref{thm:1pt_fisso}, and Proposition~\ref{thm:separated} below) rely on Schauder fixed point theorem. Intuitively, one expects to look for a fixed point of a map which solves the three equations in~\eqref{QSS_discount} separately. This procedure needs the well posedness of the third equation alone which in turns is obtained solving another fixed point problem. Theorem~\ref{thm:existence_laxmilgram} follows this strategy which is similar to the ones in~\cite{Card_Lehalle,Kobeissi1}. On the other hand, in Theorem~\ref{thm:1pt_fisso} and in Proposition~\ref{thm:separated}  we adopt a different approach looking for a fixed point of a unique map and without solving separately the third equation in~\eqref{QSS_discount}.

We also establish uniqueness of the solution under the assumption of smallness of constants. The proof relies on continuous dependence estimates of $\mu$ w.r.t. the data of the third equation. Unfortunately, in the second setting, we do not have such a regularity so this uniqueness result cannot be extended.

This paper is organized as follows: Section~\ref{sec:hyp_prelim} contains notations, assumptions and several useful results on the first two equations in systems~\eqref{QSS_discount} and~\eqref{QSS_ergodic}. In Section~\ref{sec:exist_via_Cont_dep} we obtain existence and uniqueness of the solutions to~\eqref{QSS_discount} and to~\eqref{QSS_ergodic} under an assumption on the smallness of the constants. Section~\ref{sec:exis_via_reg_dep} is devoted to obtain existence of solutions to our problems under a stability property of the Hamiltonian~$H$ w.r.t.~$\mu$. Section~\ref{sec:comments_examples} provides some examples where our assumptions are satisfied; it also contains the particular case where $H$ depends separately on~$\mu$. Finally, Appendix~\ref{sec:appendix} contains the proofs of several technical results.

\section{Assumptions and preliminary results}\label{sec:hyp_prelim}
In this section we will introduce the assumptions and we will discuss some preliminary results we need  for the study of systems~\eqref{QSS_discount} and~\eqref{QSS_ergodic}. \\
We denote with $\cP(X)$ the set of probability measures on the compact  separable metric space $X$ and we recall that  $\cP(X)$ is a compact topological space when endowed with the weak$^*$-convergence. Moreover the topology  on  $\cP(X)$ is metrizable by the Kantorovich-Rubinstein distance, defined  by
\[
\Wass(\m,\m')=\sup\big\{\int_X f(x)d(\m-\m'):\,f:X\to\R\,\mbox{is $1$-Lipschitz continuous}\big\}.
\]
We denote with $L^p(\T^d)$, $1\le p\le\infty$, the set of $p$-summable periodic functions and with $W^{k,p}(\T^d)$, $k\in\N$ and $1\le p\le\infty$, the Sobolev space of periodic functions having $p$-summable weak derivatives up to order $k$ and we set $H^k(\T^d)=W^{k,2}(\T^d)$. All these spaces are endowed with the corresponding standard norms.
For $\theta \in(0,1]$, we use the $\theta$-H\"older seminorm
\[
[f]_{\theta,A}:= \sup\left\{\frac{|f(x)-f(y)|}{|x-y|^\theta};\, x,y\in A,\, x\ne y\right\}.
\]
We denote with $C^1(\T^d)$ (respectively, $C^{1,\th}(\T^d)$ with $\th\in(0,1]$) the space of functions~$f$, defined on~$\T^d$ with continuous first order derivatives, such that the norm
\[
\|f\|_{C^1(\T^d)}:=\|f\|_{L^\infty(\T^d)}+\|Df\|_{L^\infty(\T^d)}
\]
(respectively, $\|f\|_{C^{1,\th}(\T^d)}:=\|f\|_{C^1(\T^d)}+ [Df]_{\theta,\T^d}$) is finite. In a similar way, we denote the spaces $C^2(\T^d)$ and $C^{2,\th}(\T^d)$.
For any $\d\in(0,1]$, we denote with $C^{\d,\d/2}(Q)$ the space of functions~$m$ on~$Q$ such that
\begin{equation*}
\sup_{\scriptsize{ \begin{array}{c}(x,t),(x',t')\in Q\\ (x,t)\ne (x',t')\end{array}}}\frac{|m(x,t)-m(x',t')|}{|x-x'|^\d+|t-t'|^{\d/2}}<\infty.
\end{equation*}
Defined $W^{1,0}_s(Q)$ as the space of functions on~$Q$ such that the   norm
\[
\norm{u}_{W^{1,0}_s(Q)}:=\norm{u}_{L^s(Q)}+ \norm{D u}_{L^s(Q)}
\]
is finite, we denote with $\cH_s^{1}(Q)$ the space of   functions $u\in W^{1,0}_s(Q)$ with $\partial_t u\in (W^{1,0}_{s'}(Q))'$, equipped with the natural norm
\begin{equation*}
\norm{u}_{\mathcal{H}_s^{1}(Q)}:=\norm{u}_{W^{1,0}_s(Q)}
+\norm{\partial_tu}_{(W^{1,0}_{s'}(Q))'}\ .
\end{equation*}
Let us now recall same useful properties of the spaces~$\cH^1_s(Q)$; for the proof we refer to \cite[Theorem XVIII.2.1]{DL} and \cite[Prop.2.12]{CG19} (see also \cite[Appendix A]{metafune_altri}).
\begin{lemma}\label{lemma:H1s}
The space ~$\cH^1_2(Q)$ is continuously embedded in $C([0,T];L^2(\T^d))$: there exists a constant~$c_\cH$ such that, for every $m\in \cH^1_2(Q)$, there holds
\[
\|m(t)\|_{L^2(\T^d)}\leq c_\cH \|m\|_{\cH^1_2(Q)} \qquad \forall t\in[0,T].
\]
Moreover, for $s>d+2$, $\cH_s^1(Q)$ is continuously embedded onto $C^{\delta,\delta/2}(Q)$ for some $\delta\in (0,1)$.
\end{lemma}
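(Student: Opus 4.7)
The plan is to treat the two embeddings separately, since each reduces to a classical functional-analytic result about Sobolev spaces adapted to evolution equations which can be quoted directly from \cite{DL,CG19}.

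For the first embedding, I would view $\cH^1_2(Q)$ as the energy class associated with the Gelfand triple $H^1(\T^d) \hookrightarrow L^2(\T^d) \hookrightarrow H^{-1}(\T^d)$, since $(W^{1,0}_2(Q))'$ is naturally isomorphic to $L^2(0,T;H^{-1}(\T^d))$. After approximating $m$ by smooth functions (dense in $\cH^1_2(Q)$), I would establish the distributional chain rule
\[
\frac{d}{dt}\|m(t)\|_{L^2(\T^d)}^2 = 2\langle \partial_t m(t), m(t)\rangle_{H^{-1},H^1},
\]
from which it follows that $t \mapsto \|m(t)\|_{L^2(\T^d)}^2$ admits an absolutely continuous representative, hence $m\in C([0,T];L^2(\T^d))$. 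To obtain the explicit bound I would pick $t_\star \in [0,T]$ with $\|m(t_\star)\|_{L^2(\T^d)}^2 \le T^{-1}\|m\|_{L^2(Q)}^2$ (which exists by the mean value theorem for integrals), integrate the identity on $[t_\star,t]$, and apply Young's inequality to control the right-hand side by $\|m\|_{W^{1,0}_2(Q)}^2+\|\partial_t m\|_{(W^{1,0}_2(Q))'}^2$, yielding the stated constant $c_\cH$.

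For the second embedding I would appeal to anisotropic parabolic regularity: a function $m$ with $m,Dm \in L^s(Q)$ and $\partial_t m$ in the dual of $W^{1,0}_{s'}(Q)$ can be interpreted as a very weak solution of a heat-type equation, so parametrix or real-interpolation arguments place it in an anisotropic Besov class whose Morrey-type embedding gives $C^{\delta,\delta/2}(Q)$ with $\delta = 1-(d+2)/s$, positive precisely when $s>d+2$. Concretely I would shortcut this by quoting \cite[Theorem XVIII.2.1]{DL} and \cite[Prop.~2.12]{CG19}, which already carry out the required interpolation.

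The hard part is the second statement: because $\partial_t m$ is controlled only in a genuinely anisotropic dual (rather than in $L^{s'}(0,T;W^{-1,s'}(\T^d))$), the standard parabolic maximal-regularity theorems do not apply off the shelf, and a careful parametrix or interpolation argument is needed to land in an anisotropic Hölder class. This is presumably why the authors quote the result rather than reproduce it.
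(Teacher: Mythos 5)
Your proposal is correct and matches the paper's treatment: the paper does not prove this lemma but cites exactly \cite[Theorem XVIII.2.1]{DL} and \cite[Prop.~2.12]{CG19}, and your Gelfand-triple/chain-rule argument is precisely the content of the former (with $(W^{1,0}_2(Q))'\simeq L^2(0,T;H^{-1}(\T^d))$ and the mean-value choice of $t_\star$ giving the quantitative constant $c_\cH$), while your handling of the second embedding defers to the same anisotropic result the paper quotes. The only cosmetic remark is that the paper claims merely ``some $\delta\in(0,1)$'', so your explicit $\delta=1-(d+2)/s$ is a harmless strengthening consistent with \cite{CG19}.
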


We consider a Hamiltonian $H$  given as in \eqref{eq:hamiltonian} and the following assumptions:
\begin{itemize}
\item[(H1)] 
The functions $b:\T^d\times A\times\cP(\T^d\times A)\to\R^d$ and $\ell:\T^d\times A\times\cP(\T^d\times A)\to\R$ are continuous and  the control set    $A$ is a compact, separable metric space (for simplicity we assume that~$A$ is a subset of some Euclidean space). Moreover, there exist two constants $K$ and $L$ such that
\begin{equation}\label{hyp_b}
\begin{split}
&|b(x,a;\nu)|\leq K,\qquad |b(x_1,a;\nu)-b(x_2,a;\nu)|\leq L|x_1-x_2|\\
&|\ell(x,a;\nu)|\leq K,\qquad |\ell(x_1,a;\nu)-\ell(x_2,a;\nu)|\leq L|x_1-x_2|
\end{split}
\end{equation}
for all $x,x_1,x_2\in \T^d$, $a\in A$, $\nu\in \mP(\T^d\times A)$.
We assume that $D_{x,p}H$ exists and is locally $\theta$-H\"older continuous for some $\theta\in(0,1]$.
\item[(H2)]  For any $(x,p,\n)\in\T^d\times\R^d\times \cP(\T^d\times A)$, there exists a unique $\a^*=\a^*(x,p;\n)\in A$ such that 
\[H(x,p;\nu)=  -p\cdot b(x,\a^*;\nu)-\ell(x,\a^*;\n). \]	
 Moreover the map $\a^*$ is continuous with respect to its arguments.
\item[(H3)] For each positive constant $R$, there exists a constant~$L_{\mu,R}$ such that
\begin{align*}
&\max_{x,a}|(b(x,a;\n_1)-b(x,a;\n_2))p|
+\max_{x,a}|\ell(x,a;\n_1)-\ell(x,a;\n_2)|\\
&\quad\le L_{\mu,R}\Wass(\n_1,\n_2)
\end{align*}
and
\[
[H(\cdot,\cdot,\n_1)-H(\cdot,\cdot,\n_2)]_{1, \T^d\times B(0,R)}\leq L_{\mu,R}\Wass(\n_1,\n_2)
\]
for any $p\in B(0,R)$ and $\n_1,\n_2\in \cP(\T^d\times A)$.\\
We shall denote $L_\mu$ the constant $L_{\mu,\bar K}$  where $\bar K$ is the constant introduced in Lemma~\ref{lemma:bounds_discount_ergodic} below.
\end{itemize}
\begin{remark}
Note that $\T^d\times A$ is compact and, consequently, by Prokhorov's theorem, also $\cP(\T^d\times A)$ is compact; hence, $b$ and $\ell$ are both uniformly continuous.
\end{remark}
\begin{remark}\label{rmk:H1+H3}
Assumptions $(H1)$ and $(H3)$ entail
\[
|H(x,p_1;\n_1)-H(x,p_2;\n_2)|\le K|p_1-p_2|+L_\mu\Wass(\n_1,\n_2)
\]
for any $x\in\T^d$, $p_1,p_2\in B(0,\bar K)$ with $\bar K$ as in \eqref{eq:bound_2}, $\n_1,\n_2\in \cP(\T^d\times A)$.
\end{remark}

The following lemma is a classical result concerning existence, uniqueness and regularity of classical solutions to the  HJB equations in systems \eqref{QSS_discount} and \eqref{QSS_ergodic} (see for example  \cite[Thm. 4.1]{AB}, \cite[Thm. 5]{gomes} and \cite[Prop. 2.1]{marchi}).
\begin{lemma}\label{lemma:bounds_discount_ergodic}
For a fixed measure $\nu\in \mP(\T^d\times A)$, there exists a unique classical bounded solution $u^\r$ to the equation
\begin{equation}\label{eq:HJB_bound_discount}
- \D u + H(x,Du ; \nu)+\r u =0\quad \mbox{ in } \T^d.
\end{equation}
Moreover,
\begin{itemize}
\item[(i)] there exist a positive constant  $C_1$ and $\th\in(0,1)$, both independent of $\r$  and~$\nu$,  such that
\begin{align}
&\|\r u^\r\|_{L^\infty} \le K,\label{eq:bound_1}\\
&\|u^\r-u^\r(0)\|_{C^{2,\theta}(\T^d)} \le C_1(1+K+L)=:\bar K,\label{eq:bound_2}
\end{align}
where $K$, $L$ as in \eqref{hyp_b} (recall that $K$, $L$ are independent of~$\nu$);
\item[(ii)] for $\r\to 0^+$, $\r u^\r\to \l$, $u^\r-u^\r(0)\to u$ and the couple $(u,\l)$ is the unique solution to
\begin{equation}\label{eq:HJB_bound_ergodic}
- \D u + H(x,Du ; \nu)+\l =0\quad \mbox{ in } \T^d,\qquad u(0)=0. 
\end{equation}
Moreover
\begin{equation}\label{eq:bound_3}
\|u\|_{C^{2,\theta}(\T^d)} \le \bar K.
\end{equation}
\end{itemize}
\end{lemma}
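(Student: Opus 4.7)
The plan is to treat the discounted equation first and then pass to the ergodic limit $\r\to 0^+$, following the classical route of \cite{AB,gomes,marchi}. For \emph{existence, uniqueness and} \eqref{eq:bound_1}, note that (H1) gives $|H(x,p;\n)|\le K(1+|p|)$ together with continuity in $(x,p,\n)$. Existence of a classical periodic solution $u^\r$ of \eqref{eq:HJB_bound_discount} follows from a Schauder fixed-point argument: given $w\in C^1(\T^d)$, solve the linear elliptic problem $-\Delta u+\r u = -H(x,Dw;\n)$, which admits a unique smooth periodic solution, and close the loop via $W^{2,p}$ estimates and Sobolev embedding. Uniqueness comes from the comparison principle, applicable since $H$ is $K$-Lipschitz in $p$ and $\r>0$. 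The bound \eqref{eq:bound_1} is obtained by comparison with the constants $\pm K/\r$: since $|\ell|\le K$ forces $H(x,0;\n)\in[-K,K]$, these are, respectively, super- and subsolutions of \eqref{eq:HJB_bound_discount}.

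For the \emph{$C^{2,\th}$-bound} \eqref{eq:bound_2}, the essential step is a gradient estimate $\|Du^\r\|_\infty \le C(K,L)$ uniform in $\r$ and $\n$; this is a standard Lipschitz bound for subquadratic HJB equations on the torus (see \cite[Prop.~2.1]{marchi} or \cite[Thm.~5]{gomes}), depending only on the structural constants of (H1). Given this gradient bound together with \eqref{eq:bound_1}, the right-hand side of the rewritten equation $-\Delta u^\r=-H(\cdot,Du^\r;\n)-\r u^\r$ is uniformly bounded in $L^\infty(\T^d)$, so $W^{2,p}$ theory delivers a uniform $C^{1,\a}$-bound on $u^\r-u^\r(0)$. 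The local $\th$-H\"older regularity of $D_{x,p}H$ in (H1) then upgrades the right-hand side to $C^\th(\T^d)$, and Schauder estimates on the torus produce $\|u^\r-u^\r(0)\|_{C^{2,\th}}\le C_1(1+K+L)=\bar K$ with $C_1$ depending only on the structural constants of (H1)-(H2).

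For the \emph{ergodic limit}, the bounds \eqref{eq:bound_1}, \eqref{eq:bound_2} and Arzel\`a-Ascoli produce a subsequence $\r_n\to 0^+$ along which $u^{\r_n}-u^{\r_n}(0)\to u$ in $C^2(\T^d)$ and $\r_n u^{\r_n}(0)\to \l\in\R$. Since $\r_n\|u^{\r_n}-u^{\r_n}(0)\|_\infty\le \r_n \bar K\to 0$, one obtains $\r_n u^{\r_n}\to\l$ uniformly, and passing to the limit in \eqref{eq:HJB_bound_discount} yields \eqref{eq:HJB_bound_ergodic}; \eqref{eq:bound_3} follows from the lower semicontinuity of the $C^{2,\th}$-norm. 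Uniqueness of $(u,\l)$ with $u(0)=0$ is classical: if $(u_i,\l_i)$, $i=1,2$, both solve \eqref{eq:HJB_bound_ergodic}, the strong maximum principle applied to $u_1-u_2$ at its extremal points (using the $K$-Lipschitz dependence of $H$ on $p$) forces $\l_1=\l_2$, after which the normalization gives $u_1=u_2$. Uniqueness then promotes subsequential convergence to convergence of the full family $\r\to 0^+$.

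The main technical obstacle is the uniform Lipschitz bound of the second step, which must survive the limit $\r\to 0^+$ and be independent of $\n$; once this is in place, the Schauder bootstrap and the vanishing-discount procedure are routine, so the lemma can essentially be read off from the cited references after verifying that the constants in those statements depend only on $K$ and $L$ and not on $\n$.
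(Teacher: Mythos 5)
The paper offers no proof of this lemma, presenting it as a classical result and deferring to \cite{AB}, \cite{gomes} and \cite{marchi}; your outline reconstructs exactly the standard argument behind those references (comparison with $\pm K/\rho$ for \eqref{eq:bound_1}, a uniform-in-$\rho,\nu$ gradient estimate followed by a $W^{2,p}$/Schauder bootstrap for \eqref{eq:bound_2}, and the vanishing-discount limit with the usual max/min-point and strong maximum principle arguments for the ergodic pair), so it is correct and essentially the same approach.
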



We now give a continuous dependence estimate for the solution of \eqref{eq:HJB_bound_discount} and \eqref{eq:HJB_bound_ergodic} with respect to the data of the problem.
\begin{lemma}\label{lemma:dip_continua}
Given $\n_1,\n_2\in\cP(\T^d\times A)$, denote with $u^\r_1$, $u^\r_2$ the  solutions of \eqref{eq:HJB_bound_discount} with $\n$ replaced respectively with $\n_1$ and $\n_2$ and set $w^\r_i:=u^\r_i-u^\r_i(0)$. Then, there exists a positive constant $C_0$, independent of $\r$, $\n_1$, $\n_2$, such that
\begin{multline} \label{eq:dc_0}
\|w^\r_1-w^\r_2\|_{C^2(\T^d)}\le  [H(\cdot,\cdot,\n_1)-H(\cdot,\cdot,\n_2)]_{1, \T^d\times B(0,\bar K)}+\\
C_0\Big( \max_{x,a}|b(x,a;\n_1)-b(x,a;\n_2)|+\max_{x,a}|\ell(x,a;\n_1)-\ell(x,a;\n_2)|\Big).
\end{multline}
Estimate \eqref{eq:dc_0} also holds for $u_1, u_2$  solutions to \eqref{eq:HJB_bound_ergodic} corresponding to $\n_1$ and respectively $\n_2$.
\end{lemma}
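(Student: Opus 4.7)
The plan is to linearize the Hamilton--Jacobi difference into a linear elliptic equation for $V:=w^\r_1-w^\r_2$ (for which $V(0)=0$), estimate its source term via assumptions (H1)--(H3) and the $C^{2,\theta}$ bound of Lemma~\ref{lemma:bounds_discount_ergodic}, and then apply a Schauder-type estimate uniformly in $\r\in[0,+\infty)$. Setting $v:=u^\r_1-u^\r_2$ and subtracting the two instances of \eqref{eq:HJB_bound_discount}, I would write
\[
H(x,Du^\r_1;\n_1)-H(x,Du^\r_2;\n_2)=B(x)\cdot Dv+\Psi(x),
\]
with $B(x):=\int_0^1 H_p(x,Du^\r_2+sDv;\n_1)\,ds$ and $\Psi(x):=H(x,Du^\r_2(x);\n_1)-H(x,Du^\r_2(x);\n_2)$. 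Assumption~(H1) gives $|B|\le K$; moreover $B$ is $\theta$-H\"older on $\T^d$, since $H_p$ is locally $\theta$-H\"older and each $Du^\r_i\in C^{1,\theta}$ by \eqref{eq:bound_2}. Hence $v$ solves the linear elliptic equation
\[
-\D v+B(x)\cdot Dv+\r v=-\Psi(x)\quad\text{in }\T^d.
\]

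For the source $\Psi$, recalling $H(x,p;\n)=\sup_a\{-p\cdot b(x,a;\n)-\ell(x,a;\n)\}$ and $|Du^\r_2|\le\bar K$, a direct comparison of the two suprema yields
\[
\|\Psi\|_\infty\le\bar K\max_{x,a}|b(x,a;\n_1)-b(x,a;\n_2)|+\max_{x,a}|\ell(x,a;\n_1)-\ell(x,a;\n_2)|.
\]
Writing $\Psi(x)=\Phi(x,Du^\r_2(x))$ with $\Phi(x,p):=H(x,p;\n_1)-H(x,p;\n_2)$, the Lipschitz seminorm of $\Psi$ is
\[
[\Psi]_1\le (1+[Du^\r_2]_1)\,[\Phi]_{1,\T^d\times B(0,\bar K)}\le (1+\bar K)\,[H(\cdot,\cdot;\n_1)-H(\cdot,\cdot;\n_2)]_{1,\T^d\times B(0,\bar K)}.
\]
Together these bounds control $\|\Psi\|_{C^{0,\theta}(\T^d)}$ precisely by the right-hand side of \eqref{eq:dc_0}.

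Passing to $V=v-v(0)$, which satisfies $V(0)=0$ and
\[
-\D V+B\cdot DV+\r V=-\Psi+c_0,\qquad c_0:=\r(u^\r_2(0)-u^\r_1(0)),
\]
one gets $|c_0|\le\|\Psi\|_\infty$ from the maximum principle applied to $v$. A classical Schauder estimate for this uniformly elliptic equation, combined with the pinning $V(0)=0$, then yields $\|V\|_{C^2(\T^d)}\le C\,\|\Psi\|_{C^{0,\theta}}$ with $C$ independent of $\r$, which is \eqref{eq:dc_0}. The ergodic case is completely analogous: $c_0$ is replaced by $\l_1-\l_2$, whose size is again bounded by $\|\Psi\|_\infty$ through the solvability condition for the linearized equation (integration against the invariant measure of $-\D+B\cdot D$ on~$\T^d$).

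The main technical point is making the Schauder constant uniform in $\r\in[0,+\infty)$, or equivalently controlling $\|V\|_\infty$ uniformly in $\r$: a direct maximum-principle argument yields only $\r\|V\|_\infty\le 2\|\Psi\|_\infty$, which degenerates as $\r\to 0^+$. The resolution is to exploit the normalization $V(0)=0$ together with the ergodicity / spectral-gap property of the drift--diffusion operator $-\D+B\cdot D$ on the torus: its constant mode is pinned to $0$ by the normalization, while its oscillation is controlled by $\|\Psi\|_\infty$ uniformly in $\r\ge 0$. Once this $\r$-uniform bound on $\|V\|_\infty$ is in hand, the Schauder-type estimate closes the argument; everything else is bookkeeping.
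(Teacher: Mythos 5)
Your proposal is correct in outline and its first step is literally the paper's: the bound $|c_0|\le\|\Psi\|_\infty$ combined with $\|\Psi\|_\infty\le \bar K\max_{x,a}|b(x,a;\nu_1)-b(x,a;\nu_2)|+\max_{x,a}|\ell(x,a;\nu_1)-\ell(x,a;\nu_2)|$ is exactly the comparison-principle claim \eqref{eq:dc_1}. Where you differ is in the upgrade to the $C^2$ bound on $w^\rho_1-w^\rho_2$: the paper stops at \eqref{eq:dc_1} and defers this to the argument of Marchi [Thm.~2.2], while you give a self-contained route (linearization along the segment, a $\rho$-uniform $L^\infty$ bound on the normalized difference, then Schauder), which is a legitimate and essentially equivalent path. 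The one step you assert without proof is precisely the content hidden in that citation: that $\mathrm{osc}(V)\le C\|\Psi\|_\infty$ uniformly in $\rho\ge0$ for $-\Delta V+B\cdot DV+\rho V=-\Psi+c_0$ with $\|B\|_{L^\infty}\le K$. The claim is true and provable — e.g.\ via the Feynman--Kac representation of $u^\rho_1-u^\rho_2$ and a coupling (Doeblin) estimate for the diffusion with drift bounded by $K$ on $\T^d$, or by splitting the source into its average against the invariant measure of $\Delta-B\cdot D$ (which only shifts $v$ by a constant) plus a mean-zero part handled by uniform geometric ergodicity — but a complete write-up must include this quantitative lemma, since the compactness argument used elsewhere in the paper (Lemma~\ref{lemma:new}) gives only a non-quantitative modulus. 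Two smaller caveats: your Schauder constant uses a $\theta$-H\"older bound on $B$, hence implicitly a $\nu$-uniform local H\"older constant for $D_{x,p}H$ (the paper is equally implicit here); and your conclusion carries a constant in front of \emph{both} terms rather than the literal coefficient $1$ on the seminorm in \eqref{eq:dc_0}, which is harmless for every application in the paper (both terms are there bounded by $L_\mu\Wass(\nu_1,\nu_2)$) but is not verbatim the stated inequality.
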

The proof is postponed to the Appendix.
We now study the Fokker-Planck equation.
\begin{lemma}\label{lemma:linear_FP}
Given a bounded vector field $g:Q\to\R^d$  and  $m_0\in L^2(\T^d)$, $m_0\geq0$, then the problem
\begin{align*} 
\left\{
\begin{array}{ll}
\pd_t m-  \Delta m-\diver (g(x,t)m)=0&\mbox{ in }Q,\\
m(x,0)=m_0(x)&\mbox{ in }\T^d,
\end{array}
\right.
\end{align*}
has a unique  non negative  solution $m\in \cH_2^1(Q)$. Furthermore, we have 
\begin{align}
&\Wass(m(t),m(s))\le c_0|t-s|^\half,\qquad &t,s\in [0,T],\label{eq:holder_Wass}
\end{align}
with $ c_0=c_0(\|g\|_{L^\infty(Q;\R^d)},m_0)$. 
Moreover, if $m_0\in W^{1,s}(\T^d)$, $s\in (1,\infty)$, we also have
\begin{equation}\label{eq:bound_m}
\|m\|_{\cH_s^1(Q)}\le c_1
\end{equation}
for some constant  $ c_1=c_1(\|g\|_{L^\infty(Q;\R^d)},\|m_0\|_{W^{1,s}(\T^d)})$.
\end{lemma}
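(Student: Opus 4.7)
My plan is to address the three claims using standard linear parabolic theory. For existence and uniqueness in $\cH^1_2(Q)$, I work with the weak formulation
\[
\langle \partial_t m, \varphi\rangle + \int_{\T^d} Dm\cdot D\varphi\,dx + \int_{\T^d} g m\cdot D\varphi\,dx = 0, \qquad \varphi\in H^1(\T^d),
\]
and apply the Lions/Galerkin scheme. Since $g$ is bounded, the bilinear form is continuous and, after the exponential shift $\tilde m := e^{-\l t} m$ for $\l$ large, coercive; this produces a unique $m\in\cH^1_2(Q)$ attaining $m_0$ in the $L^2$-sense. Testing against $m$ and absorbing the drift by Young's inequality gives the energy estimate
\[
\|m\|_{L^\infty(0,T;L^2(\T^d))}^2 + \|Dm\|_{L^2(Q)}^2 \le C(\|g\|_{L^\infty},T)\,\|m_0\|_{L^2}^2,
\]
and the equation itself then bounds $\|\partial_t m\|_{(W^{1,0}_2(Q))'}$. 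Non-negativity follows by testing against $w := \min(m,0)$ and applying Gronwall to $\tfrac{d}{dt}\int_{\T^d} w^2\,dx$, using $w(\cdot,0)\equiv 0$ together with $Dw=Dm$ on $\{m<0\}$ and $Dw=0$ elsewhere.

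For the Wasserstein bound \eqref{eq:holder_Wass}, let $\varphi:\T^d\to\R$ be $1$-Lipschitz and mollify on the torus to obtain $\varphi_\e\in C^\infty(\T^d)$ with $\|D\varphi_\e\|_{L^\infty}\le 1$. Integrating the weak formulation between $s$ and $t$ against $\varphi_\e$ gives
\[
\int_{\T^d} \varphi_\e\bigl(m(t) - m(s)\bigr)\,dx = -\int_s^t\!\!\int_{\T^d} \bigl(Dm + g m\bigr)\cdot D\varphi_\e\,dx\,d\tau.
\]
By Cauchy-Schwarz and the energy estimate, the $Dm$-contribution is at most $\|Dm\|_{L^2(Q)}\,|\T^d|^{1/2}|t-s|^{1/2}$, while by mass conservation $\int_{\T^d} m(\tau)\,dx = \int_{\T^d} m_0\,dx$ the $gm$-contribution is at most $\|g\|_{L^\infty}|t-s|\int_{\T^d} m_0\,dx$. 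Sending $\e\to 0$, taking the supremum over $1$-Lipschitz $\varphi$, and bounding $|t-s|$ by $T^{1/2}|t-s|^{1/2}$ yields \eqref{eq:holder_Wass}.

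For \eqref{eq:bound_m} when $m_0\in W^{1,s}(\T^d)$, I appeal to maximal $L^s$-regularity for linear parabolic equations in divergence form (Ladyzhenskaya-Solonnikov-Ural'tseva): rewriting the equation as $\partial_t m - \Delta m = \diver(gm)$ and bootstrapping the $L^2$-bound from the first step to an $L^s$-bound via parabolic $L^s$-theory (using $g\in L^\infty$), one obtains $m\in L^s(0,T; W^{1,s}(\T^d))$ with $\partial_t m$ in the required dual space, with constants depending only on $\|g\|_{L^\infty}$ and $\|m_0\|_{W^{1,s}}$. The only delicate step is the Wasserstein estimate, but it is handled cleanly because the mollified test function enters only through its gradient after integration by parts onto $m$ and $gm$, so merely $1$-Lipschitz regularity of $\varphi$ suffices and no second-order bound on $\varphi_\e$ is needed.
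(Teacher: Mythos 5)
Your argument is correct, but for the key estimate \eqref{eq:holder_Wass} it follows a genuinely different route from the paper. The paper obtains \eqref{eq:holder_Wass} by regularizing the drift ($g_n$ Lipschitz, $\|g_n\|_{L^\infty}$ controlled), invoking the coupling/stochastic-representation lemmas of Cardaliaguet's notes (Lemmas 3.4 and 3.5), whose constants depend only on $\|g_n\|_{L^\infty}$, and then passing to the limit; it delegates \eqref{eq:bound_m} to a citation. You instead give a purely deterministic duality argument: test the weak formulation on $(s,t)$ with a mollified $1$-Lipschitz function, bound the diffusive term by $\|Dm\|_{L^2(Q)}|t-s|^{1/2}$ via Cauchy--Schwarz and the drift term by $\|g\|_{L^\infty}\,|t-s|\int_{\T^d}m_0$ via non-negativity and mass conservation, then take the supremum over $1$-Lipschitz test functions. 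This avoids both the regularization of $g$ and any probabilistic tools, and it is fully self-contained once the energy estimate is in hand; the price is a constant $c_0$ that also involves $\|m_0\|_{L^2}$ (through $\|Dm\|_{L^2(Q)}$) and $T$, whereas the coupling argument yields a constant depending on $\|g\|_{L^\infty}$ alone. Since the statement allows $c_0=c_0(\|g\|_{L^\infty},m_0)$ and what matters in Theorems~\ref{thm:existence_laxmilgram} and~\ref{thm:1pt_fisso} is only independence of $\rho$, this weaker dependence is harmless. Your treatment of existence, uniqueness, non-negativity (Galerkin with exponential shift, testing with $\min(m,0)$) and of \eqref{eq:bound_m} (maximal $L^s$-regularity with a parabolic Sobolev bootstrap from the $L^2$ bound) is standard and at the same level of detail as the paper, which simply cites the literature for these points; if you wanted to make the $\cH^1_s$ step airtight you would spell out the iteration through the parabolic embeddings, but as a sketch it matches the cited result.
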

For the proof of~\eqref{eq:bound_m} we refer for example to \cite[Lemma 2.1]{camilli_e_altri}, while for the proof of~\eqref{eq:holder_Wass} it is enough to apply the results in~\cite[Lemma 3.4 and 3.5]{cardaliaguet_notes} to Lipschitz regularizations $g_n$ of $g$  and to pass to the limit (indeed, in these lemmas, the constants only depend on $\|g_n\|_{L^\infty}$).
\begin{remark}
The compactness of~$\T^d$ entails
\begin{equation}
 \int_{\T^d}|x|^2dm(t)\le 1\qquad \forall t\in [0,T].\label{eq:moment_m}
\end{equation}
\end{remark}

\section{Well-posedness via continuous dependence estimates}\label{sec:exist_via_Cont_dep}
In this section, we   prove the well-posedness of systems \eqref{QSS_discount} and   \eqref{QSS_ergodic}  making, besides $(H1)$, $(H2)$ and $(H3)$  the following additional assumption
\begin{itemize}
\item[(H4)] 
There exist $\l_0\in [0,1)$ and $\l_1\in [0,\infty)$ such that
\begin{equation}\label{hyp:key_assumption2}
\begin{split}
|\a^*(x_1,p_1;\n_1)- \a^*(x_2,p_2;\n_2)|\le& \l_0\Wass(\n_1,\n_2)\\
&+\l_1(|x_1-x_2|+|p_1-p_2|)
\end{split}
\end{equation}
for all $x_1,x_2\in \T^d$ $p_1,p_2\in \R^d$, $\n_1,\n_2\in   \cP(\T^d\times A)$.
\end{itemize}
 The assumption $(H4)$ is reminiscent of a similar assumption in the paper~\cite{Kobeissi1} which copes evolutive systems of MFG of controls.
We first prove the well-posedness and some properties of the fixed point equation in \eqref{QSS_discount}
(for the proof, see the Appendix).
\begin{lemma}\label{lemma:fixed_point}
Assume $(H2)$ and $(H4)$. Given $m\in \cP(\T^d)$ and a measurable function $p:\T^d\to\R^d$, there exists a unique solution $\mu\in\cP(\T^d\times A)$ of the fixed point equation
\begin{equation}\label{eq:fixed_1}
\m =\left( \Id,\a^*(\cdot,p(\cdot);\m)\right) \sharp m.
\end{equation}
Moreover the following properties hold:
\begin{itemize}
\item[(i)] 
Given $m\in L^2(\T^d)$, $p_n, p\in  L^2(\T^d;\R^d)$, let $\m_n,\m\in \cP(\T^d\times A)$ be the corresponding solutions to \eqref{eq:fixed_1}. Then, the following estimate holds
\begin{equation*}
\Wass(\m_n,\m)\leq \frac{\l_1}{1-\l_0}\|p_n-p\|_{L^2}\|m\|_{L^2}.
\end{equation*}	
\item[(ii)] Given $m_n, m \in\cP(\T^d)$ and  $p\in W^{1,\infty}(\T^d;\R^d)$   with Lipschitz constant $L_p$, 
 let $\m_n,\m\in \cP(\T^d\times A)$ be the corresponding solutions to \eqref{eq:fixed_1}. Then, the following estimate holds
\[
\Wass(\m_n,\m)\leq \frac{1+\l_1(1+ L_p)}{1-\l_0}\Wass (m_n,m).
\]
\end{itemize}
\end{lemma}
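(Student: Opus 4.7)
The plan is to set up a contraction mapping on $\cP(\T^d\times A)$ and then derive the two stability estimates by the same Kantorovich-Rubinstein duality argument, plus assumption $(H4)$.

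\textbf{Existence and uniqueness.} For fixed $m$ and $p$, define $T:\cP(\T^d\times A)\to\cP(\T^d\times A)$ by
$T(\m):=(\Id,\a^*(\cdot,p(\cdot);\m))\sharp m.$
I verify that $T$ is a $\l_0$-contraction in $\Wass$. Using the duality characterization, for any $\m_1,\m_2$,
\begin{align*}
\Wass(T(\m_1),T(\m_2))
&=\sup_{\Lip(f)\le 1}\int_{\T^d}\!\!\bigl[f(x,\a^*(x,p(x);\m_1))-f(x,\a^*(x,p(x);\m_2))\bigr]dm(x)\\
&\le \int_{\T^d}|\a^*(x,p(x);\m_1)-\a^*(x,p(x);\m_2)|\,dm(x)\le \l_0\Wass(\m_1,\m_2),
\end{align*}
where the last step uses $(H4)$ with $x_1=x_2$, $p_1=p_2$, and $m$ being a probability measure. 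Since $\l_0<1$, Banach's fixed point theorem gives a unique $\m$.

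\textbf{Proof of (i).} Let $\m_n,\m$ be the fixed points associated to $p_n$ and $p$ with the same $m\in L^2(\T^d)$. For every $1$-Lipschitz $f$,
\begin{align*}
\int f\,d(\m_n-\m)
&=\int\bigl[f(x,\a^*(x,p_n(x);\m_n))-f(x,\a^*(x,p(x);\m))\bigr]m(x)\,dx\\
&\le \int|\a^*(x,p_n(x);\m_n)-\a^*(x,p(x);\m)|\,m(x)\,dx\\
&\le \l_0\Wass(\m_n,\m)+\l_1\int|p_n(x)-p(x)|\,m(x)\,dx,
\end{align*}
invoking $(H4)$. Cauchy-Schwarz on the last integral yields
$\Wass(\m_n,\m)\le \l_0\Wass(\m_n,\m)+\l_1\|p_n-p\|_{L^2}\|m\|_{L^2},$
and dividing by $1-\l_0$ gives the claimed bound.

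\textbf{Proof of (ii).} Let $\m_n,\m$ be the fixed points associated to $m_n$ and $m$ with common $p\in W^{1,\infty}$. For $1$-Lipschitz $f$, add and subtract $\int f(x,\a^*(x,p(x);\m))\,dm_n(x)$:
\begin{align*}
\int f\,d(\m_n-\m)
=&\int\bigl[f(x,\a^*(x,p(x);\m_n))-f(x,\a^*(x,p(x);\m))\bigr]dm_n(x)\\
&+\int f(x,\a^*(x,p(x);\m))\,d(m_n-m)(x).
\end{align*}
The first integrand is bounded by $\l_0\Wass(\m_n,\m)$ pointwise by $(H4)$, so the first term is $\le \l_0\Wass(\m_n,\m)$. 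For the second term, the map $x\mapsto\Phi(x):=(x,\a^*(x,p(x);\m))$ has Lipschitz constant $1+\l_1(1+L_p)$ by $(H4)$ combined with $\Lip(p)\le L_p$; since $f$ is $1$-Lipschitz on $\T^d\times A$, the function $x\mapsto f(\Phi(x))$ is $(1+\l_1(1+L_p))$-Lipschitz, whence the second term is at most $(1+\l_1(1+L_p))\Wass(m_n,m)$. Taking the supremum over $f$ and absorbing $\l_0\Wass(\m_n,\m)$ on the left gives the estimate.

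\textbf{Main obstacle.} The only subtle point is (ii), where one must not apply $(H4)$ with $\n_1\ne\n_2$ and $x_1\ne x_2$ simultaneously, since that would generate an unwanted $\Wass(\m_n,\m)\cdot\Wass(m_n,m)$ cross term; splitting via add–and–subtract so that each use of $(H4)$ activates only one Lipschitz mode cleanly separates the two contributions.
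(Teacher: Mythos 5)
Your proposal is correct and follows essentially the same route as the paper: a Banach fixed point argument via the $\l_0$-contraction from $(H4)$, the same Kantorovich--Rubinstein duality estimate plus Cauchy--Schwarz for (i), and the same add-and-subtract splitting in (ii) with the $(1+\l_1(1+L_p))$-Lipschitz bound on $x\mapsto\phi(x,\a^*(x,p(x);\m))$.
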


\begin{remark}\label{remark:3.3}
Since $A$ is bounded, there exists a constant $c_2$  such that 
\begin{equation*}
\int_{\T^d\times A}(|x|^2+|a|^2)d\m(x,a)\le c_2\qquad\forall \mu\in\cP(\T^d\times A).
\end{equation*}
\end{remark}
In the following theorem, we   prove    existence of a solution to \eqref{QSS_discount}  under a smallness assumption on the constants that intervene in the regularity of~$H$ w.r.t.~$\m$.
\begin{theorem}\label{thm:existence_laxmilgram}
Assume $(H1)$, $(H2)$, $(H3)$, $(H4)$, $m_0\in H^1(\T^d)$ and
\begin{equation}\label{hyp:smallness_Lipschitz}
\frac{\l_1c_1c_\cH (C_0+1)}{1-\l_0}L_\mu< 1
\end{equation}
where $C_0$ as in \eqref{eq:dc_0}, $\l_0,\l_1$ as in \eqref{hyp:key_assumption2}, $c_1$ as in \eqref{eq:bound_m}$, c_\cH$ as in Lemma~\ref{lemma:H1s} and $L_\mu$ as in (H3). Then, problem \eqref{QSS_discount} admits a   solution $(u,m,\m)$, where
$u\in C([0,T], C^2(\T^d))$ is a classical solution of the  HJB equation for any $t\in [0,T]$, $m\in \cH^1_2(Q)$  is a weak solution of the Fokker-Planck equation  and  $\mu\in C([0,T],\cP(\T^d\times A))$ satisfies the fixed-point equation for any $t\in [0,T]$. 
Moreover, there exists a positive constant $L$,  
independent of $\rho$, such that: for any $t,s\in[0,T]$ there hold:
\begin{equation}\label{eq:regularity}
\begin{array}{ll}
(i)& \Wass(m(t),m(s))\leq L|t-s|^\half, \quad  \|m\|_{\cH^1_2}\leq L,\\
(ii)& \Wass(\m(t),\m(s))\leq L|t-s|^\half,\\
(iii)&\|[u(\cdot,t)-u(0,t)]-[u(\cdot,s)-u(0,s)]\|_{C^2(\T^d)} \leq L|t-s|^\half,\\
(iv) & \|\r u\|_{L^\infty}\leq L,\quad \|u(\cdot,t)-u(0,t)\|_{C^2(\T^d)}\leq L,\\
(v) & \textrm{if moreover $m_0\in W^{1,s}(\T^d)$ with $s>2$, then $\|m\|_{\cH^1_s}\leq L$}.
\end{array}
\end{equation}
\end{theorem}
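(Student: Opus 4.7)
My approach is to apply Schauder's fixed point theorem to an outer map $\cT: m \mapsto \tilde m$ defined on the convex set
\[
K := \{m \in \cH^1_2(Q) : m \geq 0,\ \|m\|_{\cH^1_2(Q)} \leq c_1,\ \Wass(m(t), m(s)) \leq c_0|t-s|^\half,\ m(\cdot,0) = m_0\},
\]
where $c_0,c_1$ are the constants provided by Lemma~\ref{lemma:linear_FP} for any drift with $L^\infty$-norm bounded by $K$ (note $|H_p| = |b(\cdot, \a^*;\m)| \leq K$ via \eqref{eq:optimal_control} and $(H1)$); by Arzel\`a--Ascoli, $K$ is compact in $C([0,T];\cP(\T^d))$.

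\textbf{Inner contraction defining $\cT$.} Given $m \in K$, for each $t \in [0,T]$ I would construct the pair $(u(\cdot,t), \m(t))$ as the unique common solution of the stationary HJB equation and the fixed-point relation, via a Banach iteration on $(\cP(\T^d\times A), \Wass)$. Namely, for $\m \in \cP(\T^d \times A)$, let $u^\r_\m$ be the HJB solution provided by Lemma~\ref{lemma:bounds_discount_ergodic}; then apply Lemma~\ref{lemma:fixed_point} with inputs $m(t)$ and $p = Du^\r_\m$ to obtain $\Psi_t(\m)$. Combining Lemma~\ref{lemma:dip_continua} with $(H3)$ yields
\[
\|Du^\r_{\m_1} - Du^\r_{\m_2}\|_{L^\infty} \leq (C_0+1) L_\m \Wass(\m_1, \m_2),
\]
while Lemma~\ref{lemma:fixed_point}(i), together with $\|m(t)\|_{L^2} \leq c_\cH \|m\|_{\cH^1_2(Q)} \leq c_\cH c_1$, gives
\[
\Wass(\Psi_t(\m_1), \Psi_t(\m_2)) \leq \frac{\l_1 c_\cH c_1 (C_0+1) L_\m}{1-\l_0} \Wass(\m_1, \m_2),
\]
which is a strict contraction by \eqref{hyp:smallness_Lipschitz}. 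The unique fixed point $\m(t)$ together with its HJB solution $u(\cdot, t)$ satisfies bound (iv) via Lemma~\ref{lemma:bounds_discount_ergodic}.

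\textbf{Time regularity, FP step, and Schauder.} Since $\|u(\cdot, t)\|_{C^{2,\th}} \leq \bar K$ uniformly, the map $p(\cdot, t) = Du(\cdot, t)$ has Lipschitz constant $L_p \leq \bar K$ independent of $t$; Lemma~\ref{lemma:fixed_point}(ii) then produces (ii) with constant times $\Wass(m(t), m(s)) \leq c_0 |t-s|^\half$, and Lemma~\ref{lemma:dip_continua} yields (iii). In particular the drift $g(x,t) := H_p(x, Du(x,t); \m(t))$ is continuous on $Q$ with $\|g\|_{L^\infty} \leq K$, so Lemma~\ref{lemma:linear_FP} produces $\tilde m = \cT(m) \in K$ inheriting (i) and (v). The principal obstacle is the continuity of $\cT$: for $m_n \to m$ in $C([0,T];\cP(\T^d))$ I would first propagate convergence through the inner contraction (its constant being uniform in $t$ and $m$, so $\m_n(t) \to \m(t)$ follows from continuity of $\Psi_t$ in $m(t)$), then deduce $Du_n \to Du$ uniformly via Lemma~\ref{lemma:dip_continua}, and finally pass to the limit in the FP equation by standard parabolic theory and the uniform $\cH^1_2$-bound. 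Schauder's theorem then delivers a fixed point of $\cT$, yielding the desired solution $(u, m, \m)$ together with the estimates in \eqref{eq:regularity}.
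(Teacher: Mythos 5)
Your proposal is correct, but it organizes the fixed-point structure genuinely differently from the paper. The paper applies Schauder to a map $\Gamma$ acting on pairs $(\bar u,\bar m)$: it first solves the third equation alone via Lemma~\ref{lemma:fixed_point} (whose inner Banach iteration uses only $\l_0<1$), then the HJB equation with the resulting $\m$, then the FP equation; the smallness condition \eqref{hyp:smallness_Lipschitz} enters only when checking that $\Gamma$ preserves the set $\cK$, namely to close the time-H\"older bound $(\cK3)$ on $D\bar u$ through the choice of $C^*$ in \eqref{eq:Lstar} (and then $(\cK4)$). You spend \eqref{hyp:smallness_Lipschitz} earlier: for each fixed $t$ and each $m$ in your set $K$, the composite map $\m\mapsto\Psi_t(\m)$ (HJB solution followed by Lemma~\ref{lemma:fixed_point}) is a Banach contraction with factor $\kappa:=\l_1c_1c_\cH(C_0+1)L_\mu/(1-\l_0)<1$, so the HJB equation and the fixed-point relation are solved simultaneously and uniquely given $m$, and Schauder is then run in the single variable $m$. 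This buys a leaner compact convex set (no constants $C^*$, $C^*_1$, no conditions on $\bar u$) and free stability of $(u,\m)$ with respect to $m$, at the price of a nested contraction; the two uses of the smallness hypothesis are of equal strength. Two points you should make explicit: (a) estimate \eqref{eq:regularity}-(ii) does not follow from Lemma~\ref{lemma:fixed_point}(ii) alone, because between times $s$ and $t$ both the measure and the gradient change; one must add the Lemma~\ref{lemma:fixed_point}(i) contribution $\frac{\l_1 c_\cH c_1}{1-\l_0}\|Du(t)-Du(s)\|_{L^\infty}\le \kappa\,\Wass(\m(t),\m(s))$ (via Lemma~\ref{lemma:dip_continua} and $(H3)$) and absorb it using $\kappa<1$, i.e. the uniform-contraction principle, exactly as the paper does around \eqref{eq:Lstar} and \eqref{15luglio}; (b) the claim $u\in C([0,T],C^2(\T^d))$ also needs continuity of $t\mapsto u(0,t)$, which follows from the comparison-principle estimate as in \eqref{13luglio}, and for nonemptiness/invariance of $K$ the constants $c_0,c_1$ should be taken uniform over the class of drifts bounded by $K$. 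These are presentational, not structural, gaps.
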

\begin{proof}
For $(\bu,\bm)\in C([0,T], C^{1,\th}(\T^d))\times C([0,T], \cP(\T^d))$, with $\bm\in \cH^1_2(Q)$  and $\|D\bu(t,\cdot)\|_{C^{1,\th}}\leq \bar K$ for any $t\in[0,T]$, where $\bar K$ and $\th$ are given in Lemma~\ref{lemma:bounds_discount_ergodic}, consider the  map  $(u,m)=\G(\bu,\bm)$ defined as follows:\\
\textit{(i)} To each $(\bu,\bm)$, we associate the unique map $\m:[0,T]\to \cP(\T^d\times A)$ solution to relation
\begin{equation*}
\m(t) =\left( \Id,\a^*(\cdot,D\bu(t);\m(t))\right) \sharp \bm(t),\qquad t\in [0,T];
\end{equation*}
this definition is well posed by Lemma \ref{lemma:fixed_point} and because $\bm(t)\in L^2(\T^d)$ by Lemma~\ref{lemma:H1s}.
Moreover, we observe that Lemma \ref{lemma:fixed_point} (i) and (ii) ensure
\begin{equation}\label{eq:stima_mu}
\begin{split}
\Wass(\m(t),\m(s))&\le\frac{\l_1}{1-\l_0}\|D\bu(t)-D\bu(s)\|_{L^\infty}c_{\cH}\|\bar m\|_{\cH_2^1} \\
&+\frac{1+\l_1(1+ \bar K)}{1-\l_0}\Wass(\bm(t), \bm(s)).
\end{split}
\end{equation}
Hence, by Lemma~\ref{lemma:H1s}, $\mu$ belongs to $C([0,T],\cP(\T^d\times A))$.\\
\textit{(ii)} To $\m$ in the previous step, we associate the unique solution $u:[0,T]\to C^{2,\th}(\T^d)$  to the equation
\[ \r u(t)-\Delta u(t) +H(x,D u(t);\m(t))=0,\qquad x\in\T^d.\]
We claim: $Du\in C([0,T],C^{1}(\T^d))$. Actually, applying Lemma~\ref{lemma:dip_continua} with $\nu_1=\mu(t)$ and $\nu_2=\mu(s)$, by (H3) and~\eqref{eq:stima_mu}, we get
\begin{eqnarray*}
\|Du(t)-Du(s)\|_{C^{1}}&\leq&  (C_0+1) L_\mu\Wass(\mu(t),\mu(s))\\
&\leq&  (C_0+1) L_\mu \left[\frac{\l_1}{1-\l_0}\|D\bu(t)-D\bu(s)\|_{L^\infty}c_{\cH}\|\bm\|_{\cH^1_2}\right.\\
&&\qquad\left.+\frac{1+\l_1(1+ \bar K)}{1-\l_0}\Wass(\bm(t), \bm(s))\right]\\
& \to& 0 \qquad \textrm{as }s\to t.
\end{eqnarray*}
\textit{(iii)} Finally, given $u$ and $\mu$ as in the previous steps, let $m \in C([0,T],\cP(\T^d))$ be the unique solution to the FP equation
\[
\partial_{t}m- \D m-\diver(m H_{p}(x,Du; \m(t)))=0 \quad\textrm{in } Q,\qquad m(0)=m_0 \quad\textrm{on } \T^d
\]
obtained in Lemma \ref{lemma:linear_FP} with
\[g(x,t)=H_{p}(x,Du(x,t); \m(t))=-b(x,\a^*(x,Du(x,t);\m(t)),\m(t))\]
(and taking into account relation \eqref{eq:optimal_control}).\par
We shall obtain the existence of a solution to \eqref{QSS_discount} proving, by Schauder fixed point  theorem, that the map $\G$ has a fixed point in the set $\cK$ defined by
\begin{equation*} 
\cK=\left\{
\begin{array}{l}
\qquad (\bu,\bm)\in C([0,T], C^{1,\th}(\T^d))\times C([0,T], \cP(\T^d))\,\text{s.t.}\\[4pt]
(\cK1)\, \r\|\bu(t)\|_{L^\infty}\leq K,\, \|D\bu(t)\|_{C^{1,\theta}(\T^d)}\leq \bar K\\
(\cK2)\, \Wass(\bm(t),\bm(s))\le c_0|t-s|^\half,\,\|\bm\|_{\cH^1_2(Q)}\le c_1\\
(\cK3)\,\|D\bu(t)-D\bu(s)\|_{L^\infty}\le C^*|t-s|^\half\\
(\cK4)\,  \r\|\bu(t)-\bu(s)\|_{L^\infty}\le C^*_1|t-s|^\half \end{array}
\right\},
\end{equation*}
endowed with the $C([0,T], C^{1,\th}(\T^d))\times C([0,T], \cP(\T^d))$ topology, where $K$ as in~\eqref{eq:bound_1}, $\theta$ and $\bar K$ as in \eqref{eq:bound_2}, $c_0$, $c_1$ as in \eqref{eq:holder_Wass}, \eqref{eq:bound_m}, and $C^*$ and $C^*_1$ are two constants that will be suitably chosen later on in~\eqref{eq:Lstar} and respectively in~\eqref{C*1} (and they will be independent of~$\rho$).

We claim that $\cK$ is a nonempty, convex and compact set.  We prove only compactness, since the other two properties are obvious. Consider a sequence $\{(\bu_n,\bm_n)\}$ of elements in $\cK$;  we want to prove that, possibly passing to a subsequence, it converges to some $(\bu,\bm)\in \cK$. By properties $(\cK2)$ and Lemma~\ref{lemma:H1s}, Ascoli-Arzela theorem ensures that $\{m_n\}$ converges to some element $m\in C([0,T],\cP(\T^d))$ verifying the first estimate of~$(\cK2)$. Moreover, still by $(\cK2)$, $\{m_n\}$ also converges in the weak topology of~$\cH^1_2(Q)$ to some $m'$ which must coincide with $m$; hence $m$ fulfills $(\cK2)$.\\
On the other hand, we first note that the set $\left\{u\in C^{1,\th}(\T^d): \textrm{$u$ fulfills $(\cK1)$}\right\}$ is compact in the $C^{1,\th}$ topology. Moreover, by $(\cK1)$, the $\bu_n$'s are bounded in $C([0,T];C^{1,\th}(\T^d))$ uniformly in~$n$. By $(\cK3)$-$(\cK4)$ and  Ascoli-Arzela theorem, $\{\bu_n\}$ converges to some $u$ in the $C([0,T], C^{1,\th}(\T^d))$-topology and $u$ fulfills $(\cK3)$ and $(\cK4)$. In particular, for each $t$, $\{\bu_n(t)\}$ converges to $u(t)$ in the $C^{1,\th}$-topology. Again by Ascoli-Arzela theorem, we infer that $u(t)$ fulfills $(\cK1)$.

We show that $\G$ maps $\cK$ into itself.
For $(u,m)=\G(\bu,\bm)$, $(\cK1)$ and $(\cK2)$ follow  from Lemma \ref{lemma:bounds_discount_ergodic} and, respectively, from Lemma \ref{lemma:linear_FP}.
We show that also $(\cK3)$ holds. For any $s,t\in[0,T]$, by Lemma~\ref{lemma:dip_continua} and $(H3)$, we have
\begin{equation*}
\|Du(s)-Du(t)\|_{C^1(\T^d)} \leq (C_0+1) L_\mu \Wass(\mu(s),\mu(t)).
\end{equation*}
Estimate~\eqref{eq:stima_mu}, Lemma~\ref{lemma:H1s} and $(\cK2)$ yield
\begin{equation*}
\begin{split}
&\|Du(s)-Du(t)\|_{C^1}\\&\qquad \leq (C_0+1) L_\mu\left[\frac{\l_1c_1c_{\cH}}{1-\l_0}\|D\bu(t)-D\bu(s)\|_{L^\infty} +\frac{1+\l_1(1+ \bar K)}{1-\l_0}c_0|t-s|^\half\right]\\
&\qquad \leq \left[\frac{\l_1c_1c_{\cH}(C_0+1) L_\mu}{1-\l_0}C^* +(C_0+1) c_0L_\mu\frac{1+\l_1(1+ \bar K)}{1-\l_0}\right]|t-s|^\half
\end{split}
\end{equation*}
where the last inequality is due to $(\cK3)$. By~\eqref{hyp:smallness_Lipschitz}, for $C^*$ so large to fulfill
\begin{equation}\label{eq:Lstar}
C^*\geq \left[1-\frac{\l_1c_1c_\cH (C_0+1)}{1-\l_0}L_\mu\right]^{-1}(C_0+1) c_0L_\mu\frac{1+\l_1(1+ \bar K)}{1-\l_0},
\end{equation}
there holds 
\begin{equation*}
\|Du(s)-Du(t)\|_{C^1}\leq C^* |t-s|^\half,
\end{equation*}
namely $u$ fulfills $(\cK3)$.

Let us now prove that $(\cK4)$ holds. Actually, the comparison principle and Remark~\ref{rmk:H1+H3} ensure
\begin{eqnarray}\notag
\r\|u(t)-u(s)\|_{L^\infty}&\leq & \max\limits_{x\in\T^d}|H(x,Du(t);\mu(t))-H(x,Du(s);\mu(s))|\\  \label{13luglio}
&\leq & K\|Du(t)-Du(s)\|_{L^\infty}+L_\mu\Wass(\mu(t),\mu(s)).
\end{eqnarray}
By \eqref{eq:stima_mu}, Lemma~\ref{lemma:H1s} and $(\cK2)$-$(\cK3)$, we infer
\begin{equation*}
\r\|u(t)-u(s)\|_{L^\infty}\leq \left[KC^*+\frac{L_\mu\l_1c_1c_{\cH}C^*}{1-\l_0} +\frac{1+\l_1(1+ \bar K)}{1-\l_0}c_0L_\mu\right]|t-s|^\half;
\end{equation*}
choosing
\begin{equation}\label{C*1}
C^*_1\geq KC^*+\frac{L_\mu\l_1c_1c_{\cH}C^*}{1-\l_0} +\frac{1+\l_1(1+ \bar K)}{1-\l_0}c_0L_\mu,
\end{equation}
we obtain that $(\cK4)$ holds.
We conclude that $\G$ maps $\cK$ into itself.

It remains to prove that $\G$ is a continuous map in the  $C([0,T], C^{1,\th}(\T^d))\times C([0,T], \cP(\T^d))$ topology. Consider a sequence $\{(\bu_n,\bm_n)\}$ in $\cK$ converging to $(\bu,\bm)\in\cK$. Set $(u_n,m_n)=\G(\bu_n,\bm_n)$, $(u,m)=\G(\bu,\bm)$ and let $\m_n$, $\m$ be the solution of
\[
\m_n =\left( \Id,\a^*(\cdot,D\bu_n(t);\m_n(t))\right) \sharp \bm_n(t)
\]
and, respectively, of the corresponding equation with $(\bu,\bm)$ in place of $(\bu_n,\bm_n)$.
Denote with $\tilde \m_n$ the solution of
\[
\tilde \m_n =\left( \Id,\a^*(\cdot,D\bu(t);\tilde \m_n(t))\right) \sharp \bm_n(t).
\]
By Lemma \ref{lemma:fixed_point}(i) and (ii), Lemma~\ref{lemma:H1s} and $(\cK2)$, we have
\begin{align*}
\Wass(\m_n(t),\m(t))\le	\Wass(\m_n(t),\tilde\m_n(t))+\Wass(\tilde\m_n(t),\m(t))\\
\le \frac{\l_1c_1c_\cH}{1-\l_0}\|D\bu_n(t)-D\bu(t)\|_{L^\infty}+\frac{1+\l_1(1+\bar K)}{1-\l_0}\Wass(\bm_n(t),\bm(t))
\end{align*}
and therefore 
\begin{equation}\label{eq:conv_1}
\Wass(\m_n(t),\m(t))\to 0\quad  \text{for $n\to \infty$, uniformly in $t\in [0,T]$.}
\end{equation}
By Lemma \ref{lemma:dip_continua} and $(H3)$, we deduce
\[
\|[u_n(\cdot,t)-u_n(0,t)]-[u(\cdot,t)-u(0,t)]\|_{C^2(\T^d)}=o_n(1)
\]
where, since now on, $o_n(1)$ denotes a function which may change from line to line and such that $\lim_no_n(1)=0$, uniformly in $t$; hence we have
\begin{equation}\label{310bis}
\|Du_n(\cdot,t)-Du(\cdot,t)\|_{C^1}=o_n(1).
\end{equation}
Let us now prove that $u_n\to u$ in the topology of $C([0,T],C^{1,\th}(\T^d))$. To this end, by~\eqref{310bis}, it suffices to establish
\[
\|u_n(\cdot,t)-u(\cdot,t)\|_{L^\infty}=o_n(1).
\]
Arguing as in~\eqref{13luglio}, the comparison principle and $(H3)$ entail
\begin{eqnarray*}
\r\|u_n(t)-u(t)\|_{L^\infty}&\leq & \max\limits_{x\in\T^d}|H(x,Du_n(t);\mu_n(t))-H(x,Du(t);\mu(t))|\\
&\leq & K\|Du_n(t)-Du(t)\|_{L^\infty}+L_\mu\Wass(\mu_n(t),\mu(t)).
\end{eqnarray*}
By \eqref{eq:conv_1} and \eqref{310bis}, we get
\begin{equation*}
\r\|u_n(t)-u(t)\|_{L^\infty}=o_n(1)
\end{equation*}
which is equivalent to our claim. Therefore, $u_n\to u$ in the desired topology.\\
Let us now cope the convergence of the $m_n$'s. By \eqref{eq:optimal_control}, we get
\begin{align*}
\|H_p&(\cdot,Du_n(\cdot,t);\m_n(t))-H_p(\cdot,Du(\cdot,t);\m(t))\|_{L^\infty}\\
&\leq \omega\left(\|\a^*(\cdot,Du_n(\cdot,t);\m_n(t))-\a^*(\cdot,Du(\cdot,t);\m(t))\|_{L^\infty}+\Wass(\m_n(t),\m(t))\right)
\end{align*}
where $\omega$ is a modulus of continuity of $b$. By $(H4)$, we infer
\begin{equation}\label{310ter}
\begin{array}{l}
\|H_p(\cdot,Du_n(\cdot,t);\m_n(t))-H_p(\cdot,Du(\cdot,t);\m(t))\|_{L^\infty}\\
\qquad \le \omega \left((\l_0+1)\Wass(\m_n(t),\m(t))+\l_1 \|Du_n(\cdot,t)-Du(\cdot,t)\|_{L^\infty}\right)\\
\qquad \le \omega(o_n(1)+o_n(1))=o_n(1)
\end{array}
\end{equation}
where the last inequality is due to \eqref{eq:conv_1} and to \eqref{310bis}.

By $(\cK2)$ and Ascoli-Arzela theorem, there exists a subsequence $\{m_{n_k}\}$ converging to some function $\tilde m\in C([0,T],\cP(\T^d))$. Let us prove that $\tilde m=m$. Indeed, $m_{n_k}$ solves
\begin{align*}
\iint_{\T^d\times [0,T]}(-\partial_t \psi-\Delta\psi+H_p(x,Du_{n_k};\m_{n_k})\cdot D\psi)m_{n_k}dxdt+\int_{\T^d}\psi m_0dx=0
\end{align*}
for any $\psi \in C^\infty_0(\T^d\times[0,T))$. By \eqref{310ter} and the Dominated Convergence theorem, we get
\begin{align*}
\iint_{\T^d\times [0,T]}\big(-\partial_t \psi-\Delta\psi+H_p(x,Du;\m)\cdot D\psi\big)\tilde m\, dxdt+\int_{\T^d}\psi m_0dx=0.
\end{align*}
By the uniqueness of the solution to the FP equation, it follows that $\tilde m=m$ and that all the sequence $\{m_n\}$ converges to $m$ and we conclude the continuity of the map $\G$.\\
In conclusion, by Schauder fixed point theorem, we get that there exists a fixed point of the map $\G$.

It remains to prove that such a fixed point fulfills the regularity in~\eqref{eq:regularity} and that $u$ belongs to $C([0,T],C^2(\T^d))$.
The bounds in \eqref{eq:regularity}-(i) are immediate consequences of~$(\cK2)$.
We observe that \eqref{eq:stima_mu} and $(\cK2)$ entail
\begin{equation*}
\Wass(\m(t),\m(s))\le
\frac{\l_1 c_1 c_{\cH}}{1-\l_0}\|D\bu(t)-D\bu(s)\|_{L^\infty}+c_0\frac{1+\l_1(1+\bar K)}{1-\l_0}|t-s|^\half;
\end{equation*}
using $(\cK3)$, we obtain the bound in~\eqref{eq:regularity}-(ii).
Furthermore, Lemma~\ref{lemma:dip_continua} and~\eqref{eq:regularity}-(ii) entail the estimate in~\eqref{eq:regularity}-(iii). Finally, the estimates in~\eqref{eq:regularity}-(iv) are due to~\eqref{eq:bound_1} and~\eqref{eq:bound_2} while estimate~(v) is a straightforward consequence of~\eqref{eq:bound_m}.  Finally, Lemma~\ref{lemma:dip_continua}, $(H3)$ and \eqref{eq:regularity}-(ii) easily entail that $u$ belongs to $C([0,T],C^2(\T^d))$.

\end{proof}


\begin{theorem}\label{thm:uniqueness}
Under the same hypotheses of Theorem \ref{thm:existence_laxmilgram}, assume also that there exists a constant $\Lambda_0>0$ such that
\begin{align}
&|b(x,a_1;\nu)-b(x,a_2;\nu)|\leq \Lambda_0 |a_1-a_2|\label{hyp:key_assumption1a}
\end{align}
for all $a_1,a_2\in A$, $x\in\T^d$ and $\n\in \cP(\T^d\times A)$. Then, the system \eqref{QSS_discount} admits at most one solution $(u,m,\mu)$ in $C([0,T],C^1(\T^d))\times \cH^1_s(Q)\times C([0,T],\cP(\T^d\times A))$ with $u(t)\in C^2(\T^d)$ for every $t\in[0,T]$.
\end{theorem}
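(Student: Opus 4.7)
The plan is to use the smallness assumption to produce a Lipschitz-type estimate relating $\mu$ and $Du$ to $m$, and then to run a Grönwall argument on the Fokker-Planck equation for the difference of two solutions.

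\medskip

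\textbf{Step 1 (Lipschitz control of $\mu$ and $Du$ by $m$).} Let $(u_i, m_i, \mu_i)$, $i=1,2$, be two solutions in the claimed regularity class. Fix $t\in[0,T]$ and denote by $\tilde\mu(t)$ the solution to the fixed-point equation with datum $m_2(t)$ but gradient $Du_1(t)$. Applying Lemma~\ref{lemma:fixed_point}(i) between $\mu_1(t)$ and $\tilde\mu(t)$, and Lemma~\ref{lemma:fixed_point}(ii) (with $L_p=\bar K$ thanks to \eqref{eq:bound_2}) between $\tilde\mu(t)$ and $\mu_2(t)$, together with the embedding $\mathcal{H}^1_2\hookrightarrow C([0,T];L^2)$ from Lemma~\ref{lemma:H1s} and \eqref{eq:bound_m}, I obtain
\begin{equation*}
\Wass(\mu_1(t),\mu_2(t))\le \frac{\l_1 c_1 c_\cH}{1-\l_0}\|Du_1(t)-Du_2(t)\|_{L^\infty}+\frac{1+\l_1(1+\bar K)}{1-\l_0}\Wass(m_1(t),m_2(t)).
\end{equation*}
Combining with Lemma~\ref{lemma:dip_continua} and assumption $(H3)$ applied to the two HJB equations, I get
\begin{equation*}
\|Du_1(t)-Du_2(t)\|_{C^1(\T^d)}\le (C_0+1)L_\mu\Wass(\mu_1(t),\mu_2(t)).
\end{equation*}
The smallness assumption~\eqref{hyp:smallness_Lipschitz} lets me absorb the $\mu$-term and conclude that there exists $\hat C>0$, independent of $t$, such that
\begin{equation*}
\Wass(\mu_1(t),\mu_2(t))+\|Du_1(t)-Du_2(t)\|_{L^\infty}\le \hat C\,\Wass(m_1(t),m_2(t)).
\end{equation*}

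\medskip

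\textbf{Step 2 (Lipschitz estimate on the drifts).} Write the drifts $b_i(x,t)=H_p(x,Du_i(x,t);\mu_i(t))=-b(x,\a^*(x,Du_i;\mu_i);\mu_i)$. Using the new hypothesis \eqref{hyp:key_assumption1a}, assumption $(H4)$, and (the part of) $(H3)$ controlling $b$ in $\mu$, I derive the pointwise bound
\begin{equation*}
|b_1(x,t)-b_2(x,t)|\le \L_0\bigl(\l_0\Wass(\mu_1(t),\mu_2(t))+\l_1|Du_1(t)-Du_2(t)|\bigr)+L_\mu\Wass(\mu_1(t),\mu_2(t)),
\end{equation*}
which combined with Step 1 yields $\|b_1(\cdot,t)-b_2(\cdot,t)\|_{L^\infty(\T^d)}\le \tilde C\,\Wass(m_1(t),m_2(t))$ for some constant $\tilde C$.

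\medskip

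\textbf{Step 3 (Grönwall on the Fokker-Planck equation).} Set $\eta=m_1-m_2$, which satisfies $\eta(0)=0$ and
\begin{equation*}
\pd_t\eta-\Delta\eta-\diver(b_1\eta)-\diver((b_1-b_2)m_2)=0\quad\text{in }Q.
\end{equation*}
Since $m_i\in \cH^1_s(Q)$ with $s>2$ and $b_1\in L^\infty$, $\eta$ is a valid test function, so the standard $L^2$ energy estimate gives
\begin{equation*}
\tfrac12\tfrac{d}{dt}\|\eta(t)\|_{L^2}^2+\|D\eta(t)\|_{L^2}^2\le \int_{\T^d}\bigl(b_1\eta+(b_1-b_2)m_2\bigr)\cdot D\eta\,dx,
\end{equation*}
and Young's inequality (absorbing $\tfrac12\|D\eta\|_{L^2}^2$) yields
\begin{equation*}
\tfrac{d}{dt}\|\eta(t)\|_{L^2}^2\le C\bigl(\|\eta(t)\|_{L^2}^2+\|b_1-b_2\|_{L^\infty}^2\|m_2(t)\|_{L^2}^2\bigr).
\end{equation*}
Using Step~2 and the duality bound $\Wass(m_1(t),m_2(t))\le \mathrm{diam}(\T^d)\|\eta(t)\|_{L^1}\le C\|\eta(t)\|_{L^2}$ (since $\T^d$ has finite measure), together with $\|m_2\|_{L^\infty([0,T];L^2)}\le C$ from Lemma~\ref{lemma:H1s} and \eqref{eq:bound_m}, I arrive at
\begin{equation*}
\tfrac{d}{dt}\|\eta(t)\|_{L^2}^2\le C'\|\eta(t)\|_{L^2}^2.
\end{equation*}
Grönwall's lemma and $\eta(0)=0$ give $\eta\equiv 0$, hence $m_1=m_2$. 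Inserting this into Step~1 forces $\mu_1=\mu_2$, and then the HJB equation (uniqueness in Lemma~\ref{lemma:bounds_discount_ergodic}) gives $u_1=u_2$.

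\medskip

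The principal obstacle is the coupled loop between $\mu$ and $Du$: the fixed-point equation for $\mu$ involves $Du$, which is itself determined by $\mu$ via the HJB equation. Closing this loop by a simple absorption is precisely what the smallness condition \eqref{hyp:smallness_Lipschitz} makes possible; without it (i.e.\ in the second setting of Section~\ref{sec:exis_via_reg_dep}) this step breaks down, which is why uniqueness cannot be extended there, as the authors remark.
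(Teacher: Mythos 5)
Your proposal is correct and follows essentially the same route as the paper's own proof: the Lipschitz control of $\Wass(\mu_1,\mu_2)$ and $\|Du_1-Du_2\|_{L^\infty}$ by $\Wass(m_1,m_2)$ via Lemma~\ref{lemma:fixed_point} and Lemma~\ref{lemma:dip_continua} with absorption through~\eqref{hyp:smallness_Lipschitz}, the drift estimate using~\eqref{hyp:key_assumption1a}, $(H4)$, $(H3)$, and the $L^2$ energy/Gr\"onwall argument on the Fokker--Planck equation for $m_1-m_2$. The only blemish is a harmless bookkeeping slip in Step~1: with your definition of $\tilde\mu$ (datum $m_2$, gradient $Du_1$), the roles of Lemma~\ref{lemma:fixed_point}(i) and (ii) are interchanged, but the resulting estimate is exactly the one used in the paper.
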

\begin{proof}
Let $(u_1,m_1,\mu_1)$, $(u_2,m_2,\mu_2)$ be two solutions of system~\eqref{QSS_discount} in $C([0,T],C^1(\T^d))\times \cH^1_s(Q)\times C([0,T],\cP(\T^d\times A))$  with $u_i(t)\in C^2(\T^d)$ for every $t\in[0,T]$ and $i\in\{1,2\}$. 
The function $M:=m_1-m_2$ is a solution of
\begin{equation*}
\left\{
\begin{aligned}
&\partial_{t}M - \D M -\diver(M  H_{p}(x,Du_1 ; \m_1 (t))=\\
&\hskip 8pt\diver(m_2( H_{p}(x,Du_1 ; \m_1 (t))- H_{p}(x,Du_2 ; \m_2 (t)))\qquad\mbox{ in } Q \\
& M (0)=0 .
\end{aligned}
\right.
\end{equation*}
In this proof, the letter $C$ will denote a constant that may change from line to line but it is always independent of $(u_1,m_1,\mu_1)$ and $(u_2,m_2,\mu_2)$.\\
Multiplying the previous equation by $M$ and integrating in $\T^d$, we get
\begin{equation}\label{eq:uniq1}
\begin{split}
&\half \frac{d}{dt}\|M \|^2_{L^2(\T^d)}+\int_{\T^d}|DM (t)|^2dx=-\int_{\T^d} \big[M(t) DM(t)\cdot H_{p}(x,Du_1 ; \m_1 )\\
&-m_2(t)DM(t)\cdot (H_p (x,Du_1 ; \m_1 )- H_{p}(x,Du_2 ; \m_2 ))\big]dx.
\end{split}
\end{equation} 
Estimating via Cauchy-Schwarz's inequality the two terms on the left  side of the previous equality, we have
\begin{align*}
&-\int_{\T^d}  M(t) DM(t)\cdot H_{p}(x,Du_1  ; \m_1 )dx\\
&\qquad\le \frac{1}{2 }\int_{\T^d}|M(t)H_{p}(x,Du_1  ; \m_1 )|^2dx +\frac{1}{2}\int_{\T^d}|DM (t)|^2dx	
\end{align*} 
and 
\begin{align*}
&-\int_{\T^d}m_2DM(t)\cdot\Big( H_p(x,Du_1  ; \m_1)- H_{p}(x,Du_2  ; \m_2)\Big)dx\\
&\qquad \le
\frac{1}{2 }\int_{\T^d}|m_2(t)\Big(H_p (x,Du_1  ; \m_1 )- H_{p}(x,Du_2 ; \m_2 )\Big) |^2dx \\&\qquad+\frac{1}{2}
\int_{\T^d}|DM (t)|^2dx.
\end{align*}
By Lemma~\ref{lemma:H1s}, replacing  the previous two inequalities in \eqref{eq:uniq1}, we get
\begin{equation}\label{eq:uniq2}
\begin{split}
&\half \frac{d}{dt}\|M ( t)\|^2_{L^2(\T^d)} \le C\|M ( t)\|^2_{L^2(\T^d)}\\
&+C\|m_2\|^2_{\cH^1_2(Q)}\|H_p (x,Du_1( t) ; \m_1 )-
H_{p}(x,Du_2 (t); \m_2)\|^2_{L^\infty(\T^d)}.
\end{split}
\end{equation}
Recalling   \eqref{eq:optimal_control}, assumptions $(H4)$ and $(H3)$, and  exploiting \eqref{hyp:key_assumption1a}, we have
\begin{align*}
&|H_p(x,Du_1(x,t) ; \m_1)- H_{p}(x,Du_2 (x,t); \m_2 )|\\
&= |b(x,\a^*(x,Du_1(x,t) ; \m_1);\m_1)- b(x,\a^*(x,Du_2 (x,t); \m_2 );\m_2)|\\
&\le \L_0 \big(| \a^*(x,Du_1(x,t) ; \m_1) -\a^*(x,Du_2 (x,t); \m_2 \big)|+  \l_\mu \Wass (\m_1(t),\m_2(t)))\\
&\le C (|Du_1(x,t)-Du_2(x,t) |+\Wass(\m_1 (t),\m_2 (t)))
\end{align*}
and therefore
\begin{multline}\label{eq:uniq3}
\|H_p(x,Du_1(t) ; \m_1)- H_{p}(x,Du_2 (t); \m_2 )\|_{L^\infty}\\
\le C (\|Du_1(t)-Du_2(t)\|_{L^\infty} +   \Wass (\m_1(t),\m_2(t))).
\end{multline}
We now estimate the two terms on the right  side of \eqref{eq:uniq3}.
By Lemma \ref{lemma:fixed_point}.(i),  (ii) and using \eqref{eq:bound_3}, \eqref{eq:bound_m} and Lemma~\ref{lemma:H1s}, we have
\begin{eqnarray}\label{eq:uniq11}
\Wass (\m_1(t),\m_2(t))&\le& \frac{c_1c_\cH\l_1}{1-\l_0}\|Du_1(t)-Du_2(t)\|_{L^\infty} \\ \notag &&+\frac{1+\l_1(1+\bar K)}{1-\l_0}\Wass(m_1(t), m_2(t)).
\end{eqnarray}
Furthermore, Lemma~\ref{lemma:dip_continua} and $(H3)$ entail
\begin{equation}\label{15luglio}
\begin{split}& \|Du_1(t)-Du_2(t)\|_{L^\infty}\leq  (C_0+1) L_\mu\Wass (\mu_1(t),\mu_2(t))\\
&\qquad\leq  (C_0+1) L_\mu\left[\frac{c_1c_\cH\l_1}{1-\l_0}\|Du_1(t)-Du_2(t)\|_{L^\infty} \right.\\
&\qquad\qquad\left. +\frac{1+\l_1(1+\bar K)}{1-\l_0}\Wass(m_1(t), m_2(t))\right]
\end{split}
\end{equation}
where last inequality is due to \eqref{eq:uniq11}. By hypothesis~\eqref{hyp:smallness_Lipschitz}, arguing as in~\eqref{eq:Lstar}, we get
\begin{equation*}
\|Du_1(t)-Du_2(t)\|_{L^\infty}\leq  C\Wass(m_1(t), m_2(t))\quad \forall t\in [0,T];
\end{equation*}
again by estimate~\eqref{eq:uniq11}, we conclude that 
\begin{equation} \label{uniq12}
\|Du_1(t)-Du_2(t)\|_{L^\infty}+\Wass (\m_1(t),\m_2(t))\le C\Wass(m_1(t), m_2(t))\quad \forall t\in [0,T].
\end{equation}
Replacing \eqref{eq:uniq3} and~\eqref{eq:uniq11} in~\eqref{eq:uniq2}, by \eqref{uniq12}, we finally get
\[
\frac{d}{dt}\|M (t)\|^2_{L^2(\T^d)} \le C\|M (t)\|^2_{L^2(\T^d)}.
\]
Since $M(0)=0$, the previous inequality implies that $M(t)=0$ for all $t\in [0, T]$ and therefore $m_1=m_2$.
Inequality \eqref{uniq12} implies that $\m_1=\m_2$ and, by Lemma \ref{lemma:bounds_discount_ergodic}, we also get $u_1=u_2$. 
\end{proof}
We now consider the well-posedness of the ergodic quasi stationary MFG system \eqref{QSS_ergodic}.
\begin{theorem}\label{thm:ergodic1}
Under the same assumptions of Theorem \ref{thm:existence_laxmilgram}, problem  \eqref{QSS_ergodic} admits a    solution $(u,\l,m,\m)$, where $(u,\l)\in C([0,T], C^2(\T^d))\times C([0,T])$ is a classical solution of the  HJB equation for any $t\in [0,T]$, $m\in \cH^1_2(Q)$  is a weak solution of the FP equation  and  $\mu\in C([0,T],\cP(\T^d\times A))$ satisfies the fixed-point equation for any $t\in [0,T]$. 
Moreover, there exists a constant $L$ such that
\begin{equation*} 
\begin{array}{ll}
(i)& \Wass(m(t),m(s))\leq L|t-s|^\half, \quad  \|m\|_{\cH^1_2}\leq L,\\
(ii)& \Wass(\m(t),\m(s))\leq L|t-s|^\half,\\
(iii) & |\l(t)-\l(s)|+\|u(\cdot,t)-u(\cdot,s)\|_{C^2(\T^d)} \leq L|t-s|^\half,\\
(iv) &  \textrm{if moreover $m_0\in W^{1,s}(\T^d)$ with $s>2$, then $\|m\|_{\cH^1_s}\leq L$}.
\end{array}
\end{equation*}
\end{theorem}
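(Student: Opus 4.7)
The strategy is a vanishing discount procedure that exploits the $\rho$-independent bounds supplied by Theorem~\ref{thm:existence_laxmilgram}. For each $\rho\in(0,1]$, pick a solution $(u^\rho,m^\rho,\mu^\rho)$ of~\eqref{QSS_discount} given by that theorem and set
$$v^\rho(x,t):=u^\rho(x,t)-u^\rho(0,t),\qquad \lambda^\rho(t):=\rho u^\rho(0,t).$$
Since $u^\rho(0,t)$ depends only on $t$, one has $Du^\rho=Dv^\rho$, so the fixed-point equation and the FP equation are left untouched by this renormalization, while the HJB equation rewrites as
$$-\Delta v^\rho + H(x,Dv^\rho;\mu^\rho(t)) + \rho v^\rho + \lambda^\rho(t)=0\quad\text{in }\T^d,\ \forall t\in[0,T],$$
with $v^\rho(0,t)=0$. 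This is, up to the vanishing perturbation $\rho v^\rho$, exactly the HJB equation of~\eqref{QSS_ergodic}.

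\textbf{Uniform estimates and compactness.} All the bounds in~\eqref{eq:regularity} are independent of $\rho$. In particular, \eqref{eq:regularity}(iv) yields $\|\lambda^\rho\|_{L^\infty}\le L$ and $\|v^\rho(t)\|_{C^2(\T^d)}\le L$; combined with the $C^{2,\theta}$ bound of Lemma~\ref{lemma:bounds_discount_ergodic}, one actually has $\|v^\rho(t)\|_{C^{2,\theta}(\T^d)}\le L$. From~\eqref{eq:regularity}(iii)-(iv) one deduces $\|v^\rho(t)-v^\rho(s)\|_{C^2}\le L|t-s|^{1/2}$ and $|\lambda^\rho(t)-\lambda^\rho(s)|=\rho|u^\rho(0,t)-u^\rho(0,s)|\le L|t-s|^{1/2}$. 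Finally, by~(i) and~(ii), $m^\rho$ is uniformly bounded in $\cH^1_2(Q)$ and both $m^\rho,\mu^\rho$ are uniformly $1/2$-Hölder in the Wasserstein metric. Applying the Ascoli-Arzela theorem together with the compact embedding $C^{2,\theta}\hookrightarrow C^2$, up to subsequences we obtain, as $\rho\to 0^+$,
$$v^\rho\to u\ \text{in }C([0,T],C^2(\T^d)),\qquad \lambda^\rho\to \lambda\ \text{in }C([0,T]),$$
$$\mu^\rho\to \mu\ \text{in }C([0,T],\cP(\T^d\times A)),\qquad m^\rho\to m\ \text{in }C([0,T],\cP(\T^d)),$$
and $m^\rho\rightharpoonup m$ weakly in $\cH^1_2(Q)$. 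The identity $v^\rho(0,t)=0$ is preserved in the limit, so $u(0,t)=0$.

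\textbf{Passage to the limit in the three equations.} In the HJB equation, $\rho v^\rho\to 0$ uniformly since $\|v^\rho\|_{L^\infty}$ is uniformly bounded; $Dv^\rho\to Du$ and $\Delta v^\rho\to \Delta u$ uniformly; and by Remark~\ref{rmk:H1+H3} combined with the Wasserstein convergence of $\mu^\rho(t)$, one has $H(\cdot,Dv^\rho;\mu^\rho(t))\to H(\cdot,Du;\mu(t))$ uniformly. Hence $-\Delta u+H(x,Du;\mu(t))+\lambda(t)=0$ for every $t\in[0,T]$. For the fixed-point relation, the continuity of $\alpha^*$ and the convergence of $(Dv^\rho(\cdot,t),\mu^\rho(t))$ yield the convergence of the pushforward, exactly as in step~(i) of the proof of Theorem~\ref{thm:existence_laxmilgram}. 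The FP equation is passed to the limit in its weak formulation following the argument leading to~\eqref{310ter} in that proof, which hinges on the representation $H_p=-b(\cdot,\alpha^*;\cdot)$ supplied by~\eqref{eq:optimal_control}. The bounds~(i)-(iv) then descend from~\eqref{eq:regularity} by lower semicontinuity of the norms and of the Wasserstein distance under the established convergences.

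\textbf{Main difficulty.} I do not anticipate a genuine analytic obstacle beyond what has already been settled in Theorem~\ref{thm:existence_laxmilgram}; the only delicate step is the joint extraction of subsequences ensuring that $Dv^\rho\to Du$ in $C^1(\T^d)$ uniformly in $t\in[0,T]$, so that the nonlinear term $H(x,Dv^\rho;\mu^\rho(t))$ can be safely passed to the limit together with the Wasserstein convergence of $\mu^\rho(t)$. This is precisely what the uniform $C^{2,\theta}$-bound at fixed $t$ combined with the $1/2$-Hölder-in-time $C^2$ bound provides via Ascoli-Arzela.
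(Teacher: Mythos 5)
Your proposal follows the same route as the paper: the vanishing-discount renormalization $v^\rho=u^\rho-u^\rho(0,\cdot)$, $\lambda^\rho(t)=\rho u^\rho(0,t)$, the $\rho$-uniform bounds of Theorem~\ref{thm:existence_laxmilgram}, Ascoli--Arzela compactness, and stability arguments for the three equations (uniform convergence of the drifts $H_p(\cdot,Dv^\rho;\mu^\rho)$ in the weak formulation of the FP equation, and testing the pushforward relation against continuous $\phi$). On these points your plan is essentially the paper's proof.

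There is, however, one step you assert without justification: the bound $|\lambda^\rho(t)-\lambda^\rho(s)|\le L|t-s|^{1/2}$, which you claim to ``deduce'' from \eqref{eq:regularity}(iii)--(iv). Those estimates only control the normalized function $v^\rho=u^\rho-u^\rho(0,\cdot)$ and the sup-norm of $\rho u^\rho$; they give no time modulus for $u^\rho(0,\cdot)$, so the equicontinuity of $\lambda^\rho$ (needed both for Ascoli--Arzela and for item (iii) of the statement) does not follow from them as written. The paper proves it separately by a comparison-principle argument:
\begin{equation*}
\rho\|u^\rho(\cdot,t)-u^\rho(\cdot,s)\|_{L^\infty}\le \bar K\max_{x,a}|b(x,a;\mu^\rho(t))-b(x,a;\mu^\rho(s))|+\max_{x,a}|\ell(x,a;\mu^\rho(t))-\ell(x,a;\mu^\rho(s))|\le C'|t-s|^{1/2},
\end{equation*}
using $(H3)$ and \eqref{eq:regularity}(ii), with $C'$ independent of $\rho$. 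Alternatively, you could evaluate the HJB equation at $x=0$ to write $\lambda^\rho(t)=\Delta v^\rho(0,t)-H(0,Dv^\rho(0,t);\mu^\rho(t))$ and then combine \eqref{eq:regularity}(ii)--(iii) with Remark~\ref{rmk:H1+H3}. Once this one-line estimate is supplied, the rest of your argument goes through exactly as in the paper.
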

\begin{proof}
Let $(u^\r,m^\r,\mu^\r)$ be the solution of \eqref{QSS_discount} corresponding to $\r\in (0,1)$ found in Theorem~\ref{thm:existence_laxmilgram} and set $w^\r=u^\r-  u^\r (0)$. Then $(w^\r,\r u^\r(0), m^\r,\m^\r)$ satisfies the system
\begin{equation*} 
\left\{
\begin{aligned}
&- \D w^\r + H(x,Dw^\r ; \m^\r (t))+\r w^\r +\r u^\r(0)=0 \quad &\mbox{ in } \T^d,\,\forall t\in [0,T]\\
&\partial_{t}m^\r - \D m^\r -\diver(m^\r  H_{p}(x,Dw^\r ; \m^\r (t))=0&\mbox{ in } Q\\
&\m^\r (t)=\left( \Id,\a^{*}(\cdot,Dw^\r(t);\m^\r(t))\right) \sharp m^\r (t)& \forall t\in[0,T] \\
& m^\r (0)=m_{0}& \mbox{ in } \T^d. 
\end{aligned}
\right.
\end{equation*}
We note that an easy application of the comparison principle (see relation~\eqref{eq:dc_1} below for a similar argument) entails
\begin{equation*}
\begin{array}{rl}
\r\|u^\r(\cdot,t)-u^\r(\cdot,s)\|_{L^\infty}\leq & \bar K \max_{x,a}|b(x,a;\m(t))-b(x,a;\m(s))|\\&+\max_{x,a}|\ell(x,a;\m(t))-\ell(x,a;\m(s))|,
\end{array}
\end{equation*}
with $\bar K$ as in~\eqref{eq:bound_2}. By our assumptions on $b$ and $\ell$ and by estimate~\eqref{eq:regularity}-(ii) we infer 
\[
\r\|u^\r(\cdot,t)-u^\r(\cdot,s)\|_{L^\infty}\leq C'|t-s|^{1/2}
\]
for some constant $C'$ independent of~$\r$. Letting $\r\to0^+$, we obtain that $\r u^\r(0,t)\to \l(t)$ with $\|\l\|_{L^\infty}\leq L$ and $|\l(t)-\l(s)|\leq C'|t-s|^{1/2}$.\\
Applying Ascoli-Arzela theorem for the convergence of $m^\r$ and $\m^\r$ (eventually passing to a subsequence that we still denote $(\r u^\r(0),w^\r, m^\r,\m^\r)$), we obtain that as $\r\to0^+$, there hold
\begin{itemize}
\item $\r u^\r(0,t)\to \l(t)$ in the $C([0,T])$-topology,
\item $w^\r$ converges to some $u$ in the $C([0,T],C^2(\T^d))$-topology and
\[
\|u(\cdot,t)-u(\cdot,s)\|_{C^2(\T^d)}\leq C'|t-s|^{1/2},
\]
\item $m^\r$ converge to some $m$ in the $C([0,T],\cP(\T^d))$-topology and
\[
\Wass(m(t),m(s))\leq L|t-s|^\half,
\]
\item $\m^\r$ converge to some $\m$ in the $C([0,T],\cP(\T^d\times A))$-topology and 
\[
\Wass(\m(t),\m(s))\leq L|t-s|^\half.
\]
\end{itemize}
We claim that $(u,\l,m,\m)$ is a solution to problem~\eqref{QSS_ergodic}. Indeed, since the coefficients of~$H$ are continuous w.r.t.~$\m$, as $\r\to0^+$ (with $t$ fixed), standard stability of solutions ensures that $u$ is a solution to the HJB equation in~\eqref{QSS_ergodic}.\\
On the other hand $m^\r$ is a weak solution to the FP equation in~\eqref{QSS_discount}  
 with drift $g^\r(x,t)=b(x,\a^*(x,Du^\r(x,t),\m^\r(t));\m^\r(t))$. Our assumptions on $b$, $(H2)$, \eqref{eq:regularity}-(ii) and (iii) give that $g^\r$ are equibounded and equicontinuous. We can assume (possibly passing to a subsequence) that $g^\r$ uniformly converge to $g(x,t)=b(x,\a^*(x,Du(x,t),\m(t)),\m(t))$. Hence, using the Dominated Convergence theorem in the weak formulation of the solution to the FP equation of~\eqref{QSS_discount} we obtain that $m$ is a weak solution to the FP equation of~\eqref{QSS_ergodic}. 
Finally, it remains to prove that $\m$ solves the fixed point equation in~\eqref{QSS_ergodic}. Indeed, for any $t\in[0,T]$, for any $\phi\in C(\T^d\times A)$, we have
\begin{equation}\label{conv_final}
\int_{\T^d\times A}\phi d \m^\r(t)= \int_{\T^d}\phi^\r (x)m^\r(x,t)\, dx.
\end{equation}
where $\phi^\r (x):=\phi(x,\a^*(x,Du^\r(x,t);\m^\r(t)))$ is a family of bounded, uniformly continuous function which, as $\r\to0^+$, converges uniformly to the function $\phi(x,\a^*(x,Du(x,t);\m(t)))$. Therefore, passing to the limit in~\eqref{conv_final}, we conclude
\begin{equation*}
\int_{\T^d\times A}\phi d\m(t)= \int_{\T^d}\phi(x,\a^*(x,Du(x,t);\m(t)))m(x,t)\, dx
\end{equation*}
which, by the arbitrariness of $\phi$, is equivalent to the fixed point in~\eqref{QSS_ergodic}.
\end{proof}
\begin{theorem} 
Under the same assumptions of Theorem \ref{thm:uniqueness}, the ergodic system  \eqref{QSS_ergodic} admits at most one solution $(u,\l,m,\m)$ in $C([0,T],C^2(\T^d))\times C([0,T])\times \cH^1_s(Q)\times C([0,T],\cP(\T^d\times A))$.
\end{theorem}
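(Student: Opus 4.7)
The plan is to follow the same Gronwall-type scheme as in the proof of Theorem~\ref{thm:uniqueness}, exploiting the fact that the continuous dependence estimate of Lemma~\ref{lemma:dip_continua} and the fixed-point estimates of Lemma~\ref{lemma:fixed_point} apply verbatim to the ergodic HJB equation (with the normalization $u(0,t)=0$), so only the treatment of the ergodic constant $\lambda(t)$ is genuinely new.

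More precisely, given two solutions $(u_i,\l_i,m_i,\m_i)$, $i=1,2$, of \eqref{QSS_ergodic} in the claimed regularity class, I would first set $M:=m_1-m_2$ and observe that $M$ solves exactly the same linear parabolic equation (with zero initial datum) as in the proof of Theorem~\ref{thm:uniqueness}, since the Fokker-Planck equation in \eqref{QSS_ergodic} is identical in form to the one in \eqref{QSS_discount}. Multiplying by $M$, integrating over $\T^d$ and using Cauchy-Schwarz together with Lemma~\ref{lemma:H1s} (recall that, by hypothesis, $m_2\in \cH^1_s(Q)\subset \cH^1_2(Q)$) I obtain
\begin{equation*}
\tfrac{d}{dt}\|M(t)\|_{L^2}^2 \le C\|M(t)\|_{L^2}^2 + C\|H_p(\cdot,Du_1(t);\m_1(t))-H_p(\cdot,Du_2(t);\m_2(t))\|_{L^\infty}^2 .
\end{equation*}

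Next I would estimate the $L^\infty$ norm of the difference of the drifts exactly as in \eqref{eq:uniq3}: combining \eqref{eq:optimal_control}, $(H3)$, $(H4)$ and the extra Lipschitz assumption \eqref{hyp:key_assumption1a} on $b$ in $a$, one gets
\begin{equation*}
\|H_p(\cdot,Du_1(t);\m_1(t))-H_p(\cdot,Du_2(t);\m_2(t))\|_{L^\infty} \le C\bigl(\|Du_1(t)-Du_2(t)\|_{L^\infty}+\Wass(\m_1(t),\m_2(t))\bigr).
\end{equation*}
Here the crucial observation is that Lemma~\ref{lemma:dip_continua} explicitly covers the ergodic equation under the normalization $u_i(0,t)=0$, which is precisely the condition imposed in \eqref{QSS_ergodic}. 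Hence, combining that lemma with Lemma~\ref{lemma:fixed_point} and using the smallness assumption \eqref{hyp:smallness_Lipschitz} as in \eqref{15luglio}, the same absorption argument yields
\begin{equation*}
\|Du_1(t)-Du_2(t)\|_{L^\infty}+\Wass(\m_1(t),\m_2(t))\le C\Wass(m_1(t),m_2(t))\le C\|M(t)\|_{L^2} .
\end{equation*}
Plugging into the energy inequality and applying Gronwall together with $M(0)=0$ gives $m_1\equiv m_2$, which in turn forces $\m_1\equiv\m_2$ and $Du_1\equiv Du_2$ through the estimates above; since $u_i(0,t)=0$, this gives $u_1\equiv u_2$.

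The only new point is the uniqueness of $\l$: once $u_1=u_2$ and $\m_1=\m_2$ are known, $\l_1(t)$ and $\l_2(t)$ both solve the scalar ergodic HJB equation with the same data and the same normalization $u(0,t)=0$, so Lemma~\ref{lemma:bounds_discount_ergodic}(ii) (uniqueness of the ergodic pair) yields $\l_1(t)=\l_2(t)$ for every $t\in[0,T]$. I do not expect any genuine obstacle here: the proof is essentially a transcription of Theorem~\ref{thm:uniqueness}, and the only minor point of care is to verify that all the constants appearing in Lemmas~\ref{lemma:dip_continua}--\ref{lemma:fixed_point} remain unchanged when one replaces the discounted HJB equation by its ergodic version with the normalization $u(0,t)=0$, which is already guaranteed by the statements of those lemmas.
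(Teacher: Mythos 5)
Your proposal is correct and follows essentially the same route as the paper, which simply runs the Gronwall argument of Theorem~\ref{thm:uniqueness} again, replacing the discounted continuous dependence estimate with the ergodic part of Lemma~\ref{lemma:dip_continua} in the step~\eqref{15luglio}. Your explicit treatment of the uniqueness of $\lambda$ via Lemma~\ref{lemma:bounds_discount_ergodic}(ii) is a detail the paper leaves implicit, but it is consistent with its argument.
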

\begin{proof}
The proof goes along the same lines of the one of Theorem \ref{thm:uniqueness}; here we just use the second part of the statement of Lemma~\ref{lemma:dip_continua} instead of the first part for obtaining~\eqref{15luglio}.
\end{proof}

\section{Well-posedness  via a regular dependence  on the the joint distribution of the Hamiltonian}\label{sec:exis_via_reg_dep}
The aim of this section is to obtain existence of a solution to problems~\eqref{QSS_discount} and~\eqref{QSS_ergodic} without requiring a smallness condition for the data as in \eqref{hyp:smallness_Lipschitz} and in $(H4)$ (namely, that $\l_0\in[0,1)$). We assume instead that the Hamiltonian~$H$ satisfies a stability property with respect to the time evolution of the joint distribution~$\{\m(t)\}_{t\in[0,T]}$ (see assumption $(H5)$ below). To this end, we require that~$H$ depends in a nonlocal manner in time on~$\{\m(t)\}_{t\in[0,T]}$. In the next section, we will provide an example where~$H$ satisfies such a stability property, depending only on the past evolution of the joint distribution. This property entails that the behaviour of the agents is not affected by the future evolution of the joint distribution; such a feature seems more realistic for the MFG theory.

We assume that the coefficients of the Hamiltonian $H$ satisfy  
\begin{itemize}
\item[$(H1')$]
$b$ and $\ell$ are continuous functions on $\T^d\times A\times[0,T]\times C([0,T],\cP(\T^d\times A))$ with value in $\R^d$ and respectively in $\R$. The control set~$A$ is a compact, separable metric space (for simplicity we assume that~$A$ is a subset of some Euclidean space). Moreover, there exist two constants $K$ and $L$ such that
\begin{equation}\label{hyp_b_bis}
\begin{split}
&|b(x,a;t,\nu )|\leq K,\quad |b(x_1,a;t,\nu)-b(x_2,a;t,\nu)|\leq L|x_1-x_2|\\
&|\ell(x,a;t,\nu)|\leq K,\quad |\ell(x_1,a;t,\nu)-\ell(x_2,a;t,\nu)|\leq L|x_1-x_2|
\end{split}
\end{equation}
for all $x,x_1,x_2\in \T^d$, $a\in A$, $\nu\in C([0,T],\cP(\T^d\times A))$.
\end{itemize}
Now, the Hamiltonian~$H$ in \eqref{eq:hamiltonian} is a function on $\T^d\times \R^d\times[0,T]\times C([0,T],\cP(\T^d\times A))$ with value in $\R$ defined as
\begin{equation}\label{4agosto}
H(x,p;t,\n):=\sup_{a\in A}\left\{ -p\cdot b(x,a;t,\nu)-\ell(x,a;t,\nu)\right\}.
\end{equation}
\begin{itemize}
\item[$(H2')$]  For any $(x,p,t,\n)\in\T^d\times\R^d\times[0,T]\times C([0,T],\cP(\T^d\times A))$, there exists a unique $\a^*=\a^*(x,p;t,\nu)\in A$ such that 
\[H(x,p;t,\nu)=  -p\cdot b(x,\a^*;t,\nu)-\ell(x,\a^*;t,\nu). \]	
Moreover the map $\a^*$ is continuous with respect to its arguments.

\item[$(H3')$] For each positive constant $R$, there exists a constant~$L_{\mu,R}$ such that
\begin{multline*}
\max_{x,a}|(b(x,a;t,\nu)-b(x,a;t',\nu'))p| +\max_{x,a}|\ell(x,a;t,\nu)-\ell(x,a;t',\nu')| \\
\le  L_{\mu,R}|t-t'|+L_{\mu,R}\,\sup_{r\in [0,t]}\Wass(\n(r),\n'(r))
\end{multline*}
for any $p\in B(0,R)$, $t,t'\in[0,T]$ and $\n,\n'\in C([0,T],\cP(\T^d\times A))$.\\
As in $(H3)$, we denote $L_\mu$ the constant $L_{\mu,\bar K}$  where $\bar K$ is the constant introduced in Lemma~\ref{lemma:bounds_discount_ergodic}.
\item[$(H4')$]
There exist $\l_0,\l_1\in [0,\infty)$   such that
\begin{multline*}
|\a^*(x,p;t,\nu)- \a^*(x',p';t',\nu')|\le \l_0\sup_{r\in [0,t]}\Wass(\n(r),\n'(r))\\+\l_1(|x-x'|+|p-p'|+|t-t'|)
\end{multline*}
for all $x,x'\in \T^d$, $p,p'\in \R^d$, $t,t'\in [0,T]$, $\n,\n'\in C([0,T],\cP(\T^d\times A))$.
\end{itemize}
\begin{remark}
Note that $(H4')$ does not require $\l_0\in[0,1)$ as in $(H4)$.
\end{remark}
We introduce the following assumption which concerns the  stability of $H$ w.r.t. the joint distribution $\m$
\begin{itemize}
\item[(H5)] For any $R>0$   and for every sequence $\m_n\in C([0,T],{\cal P}(\T^d\times A))$, the family of functions $(x,p,t)\mapsto H(x,p;t, \m_n)$ is sequentially compact in the topology of the  uniform convergence on $\T^d\times \overline{B(0,R)}\times [0,T]$.
\end{itemize}
In Example 2 of Section \ref{sec:comments_examples}, we provide an Hamiltonian which satisfies the previous assumptions and only depends on the past evolution of the joint distribution state-control (memory effect).\\
We establish a time-regularity result (independent of the measure $\m$) for the solution $u$ of the HJB equation in \eqref{QSS_discount}. Recall that this equation  is stationary and depends on the time only through the measure $\m$. For the proof of the next result, see the Appendix.
\begin{lemma}\label{lemma:new}
Assume $(H1')$, $(H2')$ and $(H5)$.  For $\m\in C([0,T],\cP(\T^d\times A))$ and $\r\in(0,1)$, let $u$ be the solution to
\begin{equation}\label{HJB_discount}
- \D u + H(x,Du ; t, \m)+\r u =0 \quad \mbox{ in } \T^d,\,\forall t\in [0,T].
\end{equation}
Then, there exists a modulus of continuity $\o$, independent of~$\r\in(0,1)$, such that for any $\m\in C([0,T],\cP(\T^d\times A))$, there holds
\begin{equation}\label{est_grad}	
\|Du(t)-Du(s)\|_{L^\infty}\leq \o(|t-s|)\qquad\forall t,s\in[0,T].
\end{equation}
\end{lemma}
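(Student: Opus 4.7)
I would prove the lemma by contradiction via a compactness argument centered on $(H5)$. Suppose the estimate fails: there exist $\varepsilon_0>0$ and sequences $\rho_n\in(0,1)$, $\mu_n\in C([0,T],\cP(\T^d\times A))$, $t_n,s_n\in[0,T]$ with $|t_n-s_n|\to 0$, such that the corresponding solutions $u_n$ of~\eqref{HJB_discount} satisfy $\|Du_n(t_n)-Du_n(s_n)\|_{L^\infty}\geq \varepsilon_0$. Setting $v_n(t):=u_n(t)-u_n(0,t)$ and invoking Lemma~\ref{lemma:bounds_discount_ergodic}, I obtain the uniform bounds $\|v_n(t)\|_{C^{2,\theta}(\T^d)}\leq\bar K$ and $|\rho_n u_n(0,t)|\leq K$, with $\bar K,\theta,K$ independent of $n$, $t$ and of the specific choice of $\mu_n$.

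Passing to subsequences (not relabeled), I may assume $t_n,s_n\to t_*\in[0,T]$, $\rho_n\to\rho_*\in[0,1]$, $\rho_n u_n(0,t_n)\to\alpha_1$, and $\rho_n u_n(0,s_n)\to\alpha_2$. The crucial step is then to invoke $(H5)$ to further extract so that $H(\cdot,\cdot;\cdot,\mu_n)\to\tilde H$ uniformly on $\T^d\times\overline{B(0,\bar K)}\times[0,T]$ for some continuous $\tilde H$. Ascoli-Arzela, applied to the $C^{2,\theta}$-bounded sequences $\{v_n(t_n)\}$ and $\{v_n(s_n)\}$, yields (after a further extraction) $v_n(t_n)\to v_1^*$ and $v_n(s_n)\to v_2^*$ in $C^2(\T^d)$, so in particular $Dv_n(t_n)\to Dv_1^*$ and $Dv_n(s_n)\to Dv_2^*$ uniformly on $\T^d$. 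Passing to the limit in the HJB equation satisfied by $v_n(t_n)$ (obtained from~\eqref{HJB_discount} by subtracting $\rho_n u_n(0,t_n)$) and similarly for $v_n(s_n)$, one arrives at
\[
-\Delta v_i^* + \tilde H(x,Dv_i^*,t_*) + \rho_* v_i^* + \alpha_i = 0,\qquad v_i^*(0)=0,\qquad i=1,2.
\]

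At this point the uniqueness results of Lemma~\ref{lemma:bounds_discount_ergodic} close the argument. If $\rho_*>0$, the functions $u_i^*:=v_i^*+\alpha_i/\rho_*$ are two classical solutions on $\T^d$ of the same discounted HJB equation with Hamiltonian $\tilde H(\cdot,\cdot,t_*)$, hence $u_1^*=u_2^*$; imposing $v_i^*(0)=0$ then forces $\alpha_1=\alpha_2$ and thus $v_1^*=v_2^*$. If $\rho_*=0$, the pairs $(v_i^*,\alpha_i)$ are two ergodic solutions with the normalization $v(0)=0$, and Lemma~\ref{lemma:bounds_discount_ergodic}(ii) again yields $v_1^*=v_2^*$. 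In either case $Dv_1^*=Dv_2^*$, which contradicts $\|Du_n(t_n)-Du_n(s_n)\|_{L^\infty}=\|Dv_n(t_n)-Dv_n(s_n)\|_{L^\infty}\geq\varepsilon_0$.

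The main difficulty---and precisely the reason for imposing $(H5)$---is the absence of any uniform-in-$\mu$ modulus of continuity of $H(x,p;t,\mu)$ in the time variable: without extra structure, a direct quantitative control of $\|Du(t)-Du(s)\|_{L^\infty}$ via a continuous dependence estimate (as in Section~\ref{sec:exist_via_Cont_dep}) is unavailable, because such an estimate involves the Wasserstein distance $\Wass(\mu(t),\mu(s))$, which is exactly what we want to avoid estimating here. Assumption $(H5)$ bypasses this by providing qualitative relative compactness of $\{H(\cdot,\cdot;\cdot,\mu_n)\}$, thereby reducing the time-regularity question to uniqueness for a single stationary limit HJB equation.
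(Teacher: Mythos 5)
Your proposal is correct and follows essentially the same route as the paper's proof: a contradiction argument using the uniform $C^{2,\theta}$ bounds of Lemma~\ref{lemma:bounds_discount_ergodic}, the compactness of the Hamiltonians provided by $(H5)$, Ascoli--Arzel\`a, passage to a limit stationary HJB problem, and uniqueness of that limit problem, split into the cases $\rho_*>0$ and $\rho_*=0$. The only cosmetic difference is that you normalize $v_n=u_n-u_n(0,\cdot)$ and track the constants $\alpha_i$ in both cases, whereas the paper works with the full $u_n$ when the limit discount is positive and subtracts the value at the origin only in the ergodic case.
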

The results of  Lemmas \ref{lemma:bounds_discount_ergodic} and \ref{lemma:dip_continua} still hold with the constants uniform in $t\in [0,T]$ and $\nu\in C([0,T],{\cal P}(\T^d\times A))$ (exploiting assumption~$(H5)$).

In Theorem \ref{thm:existence_laxmilgram},  to prove existence of a solution, we used two fixed point maps, one for the equation \eqref{eq:fixed_1}  and one to get the existence  of a  solution to the MFG system. Here we shall follow  a completely different approach, which relies on a unique fixed point argument.
\begin{theorem}\label{thm:1pt_fisso}
Assume $(H1')$, $(H2')$, $(H3')$, $(H4')$, $(H5)$ and $m_0\in H^{1}(\T^d)$. Then, problem \eqref{QSS_discount} admits a solution~$(u,m,\m)$ where $u\in C([0,T], C^2(\T^d))$ is a classical solution of the  HJB equation for any $t\in [0,T]$, $m\in \cH^1_2(Q)$  is a weak solution of the FP equation  and  $\mu\in C([0,T],\cP(\T^d\times A))$ satisfies the fixed point equation for any $t\in [0,T]$.\\
Moreover, there exists a positive constant $L$ and a modulus of continuity~$\o$ (both independent of $\rho$) such that, for any $t,s\in[0,T]$, there hold:
\begin{equation}\label{eq:regularity2}
\begin{array}{ll}
(i)& \Wass(m(t),m(s))\leq L|t-s|^\half, \quad  \|m\|_{\cH^1_2}\leq L,\\
(ii)&  \Wass(\m(t),\m(s))\leq \o(|t-s|),\\
(iii)& \|[u(\cdot,t)-u(0,t)]-[u(\cdot,s)-u(0,s)]\|_{C^2(\T^d)} \leq \o(|t-s|),\\
(iv) & \|\r u\|_{L^\infty}\leq L,\quad \|u(\cdot,t)-u(0,t)\|_{C^2(\T^d)}\leq L,\\
(v) & \textrm{if moreover $m_0\in W^{1,s}(\T^d)$ with $s>2$, then $\|m\|_{\cH^1_s}\leq L$}.
\end{array}
\end{equation}
\end{theorem}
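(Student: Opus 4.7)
The plan is to construct a single map $\Phi$ on $C([0,T],\cP(\T^d\times A))$ whose fixed point directly yields a solution of~\eqref{QSS_discount}, and then apply Schauder's theorem. Unlike in the proof of Theorem~\ref{thm:existence_laxmilgram}, I do not decouple the fixed-point equation for $\m$ from the HJB--FP system: this is how one can drop the smallness condition~\eqref{hyp:smallness_Lipschitz} (and $\l_0\in[0,1)$), replacing it with the $\bar\m$-uniform modulus for $Du(\cdot,t)$ supplied by Lemma~\ref{lemma:new}.

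Given $\bar\m\in C([0,T],\cP(\T^d\times A))$, I would define $\Phi(\bar\m)$ in three steps: (i)~for each $t$, solve the stationary HJB equation $-\D u+H(x,Du;t,\bar\m)+\r u=0$ in~$\T^d$ via the extended Lemma~\ref{lemma:bounds_discount_ergodic}, so that by Lemma~\ref{lemma:new} the gradient $Du$ has a modulus of continuity~$\o$ in~$t$, uniform in $\bar\m$; (ii)~solve the FP equation with drift $g(x,t)=-b(x,\a^*(x,Du(x,t);t,\bar\m);t,\bar\m)$ to obtain $m\in\cH^1_2(Q)$ via Lemma~\ref{lemma:linear_FP}; (iii)~set $\Phi(\bar\m)(t):=(\Id,\a^*(\cdot,Du(\cdot,t);t,\bar\m))\sharp m(t)$. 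The invariant set $\cK$ is the collection of paths in $C([0,T],\cP(\T^d\times A))$ with a prescribed modulus of continuity $\tilde\o$ (to be fixed below); it is convex (by the duality formula for $\Wass$) and compact (Ascoli-Arzel\`a, since $\cP(\T^d\times A)$ is $\Wass$-compact).

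For the invariance $\Phi(\cK)\subset\cK$, given any $1$-Lipschitz $\phi:\T^d\times A\to\R$ I would split
\begin{align*}
\int\phi\,d\Phi(\bar\m)(t)-\int\phi\,d\Phi(\bar\m)(s)
=&\int\bigl[\phi(x,\a^*(x,Du(x,t);t,\bar\m))-\phi(x,\a^*(x,Du(x,s);s,\bar\m))\bigr]dm(t)\\
&+\int\phi(x,\a^*(x,Du(x,s);s,\bar\m))\,d(m(t)-m(s)).
\end{align*}
Using $(H4')$ (with $\bar\m$ the same on both sides, so the $\l_0$ term vanishes), the first integral is bounded by $\l_1(\o(|t-s|)+|t-s|)$; the integrand in the second is Lipschitz in~$x$ with constant $1+\l_1(1+\bar K)$ since $\|D^2u\|_{L^\infty}\le\bar K$, so by Lemma~\ref{lemma:linear_FP} it is bounded by $c_0(1+\l_1(1+\bar K))|t-s|^\half$. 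Thus $\Phi(\bar\m)$ admits the modulus $\tilde\o(r):=\l_1(\o(r)+r)+c_0(1+\l_1(1+\bar K))r^\half$, which is independent of $\bar\m\in\cK$.

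For the continuity of $\Phi$, if $\bar\m_n\to\bar\m$ in $\cK$, then~$(H3')$ yields uniform convergence of $b(\cdot,\cdot;\cdot,\bar\m_n)$, $\ell(\cdot,\cdot;\cdot,\bar\m_n)$; the extended Lemma~\ref{lemma:dip_continua} (valid here thanks to~$(H5)$) gives $u_n\to u$ in $C([0,T],C^2(\T^d))$; continuity of $\a^*$ from~$(H2')$ gives uniform convergence of the drifts, and the standard stability argument for the FP equation as in the proof of Theorem~\ref{thm:existence_laxmilgram} yields $m_n\to m$; pushing forward, $\Phi(\bar\m_n)\to\Phi(\bar\m)$. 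Schauder's theorem then produces a fixed point $\bar\m=\m$, and the associated triple $(u,m,\m)$ solves~\eqref{QSS_discount}. The regularity in~\eqref{eq:regularity2} follows at once: (i), (iv), (v) from Lemmas~\ref{lemma:bounds_discount_ergodic}--\ref{lemma:linear_FP}; (ii) from the invariance step; and (iii) by applying the extended Lemma~\ref{lemma:dip_continua} to $H(\cdot,\cdot;t,\m)$ vs.\ $H(\cdot,\cdot;s,\m)$, using $(H3')$ with the same path~$\m$. The main obstacle is obtaining the $\bar\m$-uniform modulus of $Du$ in $t$, which is exactly the content of Lemma~\ref{lemma:new}; this replaces here the role played by the smallness~\eqref{hyp:smallness_Lipschitz} in Theorem~\ref{thm:existence_laxmilgram}.
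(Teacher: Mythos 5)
Your proposal is correct and follows essentially the same route as the paper: a single Schauder fixed-point argument for the map $\bar\m\mapsto(\Id,\a^*(\cdot,Du(\cdot,t);t,\bar\m))\sharp m(t)$ on a convex compact set of measure paths defined by a prescribed modulus of continuity, with the invariance estimate obtained by the same two-term splitting (the $(H4')$ bound with the $\l_0$ term absent since the path is the same, plus the $(1+\l_1(1+\bar K))$-Lipschitz push-forward combined with $\Wass(m(t),m(s))\le c_0|t-s|^{1/2}$), and with Lemma~\ref{lemma:new} supplying the $\bar\m$-uniform time-modulus of $Du$ in place of the smallness condition. The continuity of the map and the derivation of~\eqref{eq:regularity2} also match the paper's argument.
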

\begin{proof}
We consider the  map $\Psi$ that, to each $\m\in C([0,T], \cP(\T^d\times A))$, associates
\begin{equation*} 
\Psi(\mu)(t):= \left( \Id,\a^*(\cdot,Du(t);t, \m)\right) \sharp m(t),\qquad t\in [0,T],
\end{equation*}
where the functions $u$ and $m$ are the solutions of HJB equation and of the FP equation in~\eqref{QSS_discount} corresponding to $\m$.
We introduce the set
\begin{equation*} 	
\cK=\left\{
\begin{array}{l}
\m\in C([0,T], \cP(\T^d\times A)):\text{ for any $s,t\in[0,T]$,} \\[4pt]
(\cK1)\, \int_{\T^d\times A}\left[|x|^2+|a|^2\right]d\m(t)(x,a)<c_2\\[4pt]
(\cK2)\, \Wass(\m(t),\m(s))\le \o_\cK(|t-s|)
\end{array}
\right\}
\end{equation*}
 endowed with the topology of $C([0,T], \cP(\T^d\times A))$, where $c_2$ is a constant and $\o_\cK$ a modulus of continuity, both independent of $\r\in(0,1]$, that will be suitably chosen later on.\\
By Ascoli-Arzela theorem, the set~$\cK$ is compact. Moreover, it is convex and not empty.
Let us assume for the moment that
\begin{itemize}
\item[(a)] $\Psi$ maps $\cK$ into itself;
\item[(b)] $\Psi$ is a continuous map from $C([0,T], \cP(\T^d\times A))$ into itself. 
\end{itemize}
Invoking Schauder fixed point theorem for the map $\Psi$ on the set $\cK$, we obtain a solution to problem~\eqref{QSS_discount} with $\m\in\cK$.\\
Let us now establish the bounds in~\eqref{eq:regularity2}. Point~$(i)$ is due to Lemma~\ref{lemma:linear_FP}. Point~$(ii)$ is equivalent to~$(\cK2)$. Point~$(iii)$ is an easy consequence of point~$(ii)$, assumption~$(H3')$ and Lemma~\ref{lemma:dip_continua}. Point~$(iv)$ is due to Lemma~\ref{lemma:bounds_discount_ergodic}. Point~$(v)$ follows from~\eqref{eq:bound_m}.

It remains to prove properties $(a)$ and $(b)$.\\
$(a)$.
Let us first prove that
\begin{equation}\label{3giu_7}
\Psi(\mu)\in C([0,T], \cP(\T^d\times A)).
\end{equation}
Indeed, in order to prove this property, it is enough to prove $\Psi(\m)(s)\to \Psi(\m)(t)$ in the  weak$^*$-topology as $s\to t$, namely 
\begin{equation}\label{3giu_pag2}
\lim_{s\to t}\int_{\T^d\times A}\phi(x,\a)\Psi(\m)(s)(dx,d\a)=\int_{\T^d\times A}\phi(x,\a)\Psi(\m)(t)(dx,d\a)
\end{equation}
for any bounded and continuous function~$\phi=\phi(x,\a)$.
Actually, we have
\[
\int_{\T^d\times A}\phi(x,\a)\Psi(\m)(s)(dx,d\a)=
\int_{\T^d}\phi(x,\a^*(x,Du(x,s);s, \m))m(s)dx
\]
and similarly for the right hand side of~\eqref{3giu_pag2}. We observe that Lemma~\ref{lemma:dip_continua} and $(H3')$ ensure
\[
\lim_{s\to t}\|Du(s)-Du(t)\|_{L^\infty}=0.
\]
Taking   into account  $(H4')$, we have
\[
\lim_{s\to t}\a^*(x,Du(x,s);s,\m)= \a^*(x,Du(x,t);t,\m)\qquad \textrm{uniformly in $x$.}
\]
By~\eqref{eq:holder_Wass}, we also have $\Wass(m(s),m(t))\to 0$ and we deduce relation~\eqref{3giu_pag2}; hence, property~\eqref{3giu_7} is completely proved.\\
Since $\T^d\times A$ is compact, for $c_2$ sufficiently large (and independent of~$\r$), it is obvious that~$\Psi(\m)$ fulfills~$(\cK1)$.
In order to prove $(\cK2)$, we first claim
\begin{multline}\label{3giu_8}
\Wass(\Psi(\m)(s),\Psi(\m)(t))\leq 
c_0(1+\l_1+\bar K\l_1)|t-s|^{1/2}+\l_1(\o(|t-s|)+|t-s|),
\end{multline}
where $\o$ is the modulus of continuity found in Lemma~\ref{lemma:new}.
Indeed, by definition of $\Wass$ and of $\Psi(\mu)$, (recall that, by Lemma \ref{lemma:linear_FP}, for every $t\in[0,T]$, the measure $m(t)$ has a density), we have
\begin{equation}\label{3giu_pag3}
\begin{array}{l}
\Wass(\Psi(\m)(s),\Psi(\m)(t))\\
=\sup \int_{\T^d} [\phi(x,\a^*(x,Du(x,s);s,\m))m(s)-\phi(x,\a^*(x,Du(x,t);t,\m))m(t)] dx\\
\leq \sup \int_{\T^d}\phi(x,\a^*(x,Du(x,s);s,\m))(m(s)-m(t))\, dx\\
\quad+\sup \int_{\T^d}\left[\phi(x,\a^*(x,Du(x,s);s,\m))-\phi(x,\a^*(x,Du(x,t);t,\m))\right]m(t)\, dx\\
=: \sup A_1 +\sup B_1
\end{array}
\end{equation}
where the suprema are performed over all $1$-Lipschitz function $\phi$.
We observe that, by assumption $(H4')$ and \eqref{eq:bound_2}, the map
\[
x\mapsto \tilde\phi(x):=\phi(x,\a^*(x,Du(x,s);s,\m)) 
\]
is $(1+\l_1+\bar K\l_1)$-Lipschitz continuous; in particular, by~\eqref{eq:holder_Wass}, we infer 
\begin{equation}\label{3giu_pag3A}
\begin{array}{rcl}
\sup A_1
&\leq& (1+\l_1+\bar K\l_1)\Wass(m(s),m(t))\\
&\leq& c_0(1+\l_1+\bar K\l_1)|t-s|^{1/2}.
\end{array}
\end{equation}
On the other hand, we have
\begin{equation}\label{3giu_pag3B}
\begin{array}{rcl}
\sup B_1 &\leq &\|\phi(\cdot,\a^*(\cdot,Du(s);s,\m))-\phi(\cdot,\a^*(\cdot,Du(t);t,\m))\|_{L^\infty}\\
&\leq & \sup_{x\in\T^d}|\a^*(x,Du(x,s);s,\m)-\a^*(x,Du(x,t);t,\m)|\\
&\leq & \l_1\|Du(s)-Du(t)\|_{L^\infty}+\l_1|t-s|\\
&\leq & \l_1 (\o(|t-s|)+|t-s|)
\end{array}
\end{equation}
where the last two inequalities are due to assumption $(H4')$ and respectively Lemma~\ref{lemma:new}.
Replacing inequalities~\eqref{3giu_pag3A} and~\eqref{3giu_pag3B} in~\eqref{3giu_pag3}, we accomplish the proof of estimate~\eqref{3giu_8}.\\
If we choose $\o_\cK$ in $(\cK2)$ as
\[\o_\cK(r)=c_0(1+\l_1+\bar K\l_1)|r|^{1/2}+\l_1(\o(|r|)+|r|),\]
then, by \eqref{3giu_8}, we get that $\Psi(\mu)$ satisfies also $(\cK2)$. 

\noindent $(b)$. For $\m\in C([0,T],\cP(\T^d\times A))$, let $u$ and $m$ be the corresponding solutions of the HJB equation and respectively of the FP equation in~\eqref{QSS_discount}. Consider a sequence $\m_n\in C([0,T], \cP(\T^d\times A))$ such that, as $n\to+\infty$, $\m_n\to \m$ in the $C([0,T], \cP(\T^d\times A))$ topology. We want to prove that $\Psi(\m_n)\to \Psi(\m)$ in the same topology.
We denote $u_n$ and $m_n$ respectively the solutions to the first two equations in~\eqref{QSS_discount} with $\m$ replaced by $\m_n$; since now on, $o_n(1)$ stands for a function on $[0,T]$ (which may change from line to line) such that $\lim_{n\to\infty} o_n(1)=0$ uniformly in $[0,T]$.\\
We have $\sup_{[0,T]}\Wass(\m_n(t),\m(t))=o_n(1)$; hence, by Lemma~\ref{lemma:dip_continua} and $(H3')$, we deduce
\begin{equation*}
\|Du_n(t)-Du(t)\|_{L^\infty}=o_n(1).
\end{equation*}
Moreover, by \eqref{eq:optimal_control}, $(H1')$, $(H4')$ and boundedness of $Du_n$ in \eqref{eq:bound_2}, we get
\begin{equation*}
  \|H_p(\cdot,Du_n(t);t, \m_n)-H_p(\cdot,Du(t);t, \m)\|_{L^\infty}=o_n(1).
\end{equation*}
By estimates~\eqref{eq:holder_Wass} and \eqref{eq:moment_m}, (possibly passing to a subsequence) as $n\to\infty$ the sequence $\{m_n\}$ converges to some function~$\tilde m$ in the $C([0,T], \cP(\T^d\times A))$-topology. By stability, the function~$\tilde m$ solves the Fokker-Planck equation in~\eqref{QSS_discount} which admits a unique solution; therefore, $\tilde m$ coincides with~$m$ and, as $n\to\infty$, the whole sequence~$\{m_n\}$ converges to~$m$ in the $C([0,T], \cP(\T^d\times A))$-topology.
Moreover, there holds
\begin{equation}\label{3giu_pag5}
\begin{array}{l}
\Wass(\Psi(\m_n)(t),\Psi(\m)(t))=\sup \int_{\T^d\times A}\phi(x,\a)[\Psi(\m_n)(t)-\Psi(\m)(t)](dx,d\a)\\
\quad=\sup\int_{\T^d}\left[\phi(x,\a^*(x,Du_n(x,t);t, \m_n))m_n(t,x)\right.\\
\qquad\left.-\phi(x,\a^*(x,Du(x,t);t, \m))m(t,x)\right]\, dx\\
\quad\leq \sup\int_{\T^d}\phi(x,\a^*(x,Du_n(x,t);t, \m_n))[m_n(t,x)-m(t,x)]\, dx\\
\qquad +\sup\int_{\T^d}\big[\phi(x,\a^*(x,Du_n(x,t);t, \m_n))\\
\qquad-\phi(x,\a^*(x,Du(x,t);t, \m))\big]m(t,x)\, dx
 =: \sup A_2 +\sup B_2,
\end{array}
\end{equation}
where the suprema are performed over all $1$-Lipschitz function $\phi$.
We observe that, by assumption $(H4')$ and \eqref{eq:bound_2}, the map
\[
x\mapsto \phi_n(x):=\phi(x,\a^*(x,Du_n(x,t);t,\m_n)) 
\]
is $(1+\l_1+\l_1\bar K)$-Lipschitz continuous; in particular, we infer
\begin{equation*}
\begin{array}{rcl}
\sup A_2
&\leq& (1+\l_1+\l_1\bar K)\Wass(m_n(t),m(t))=o_n(1).
\end{array}
\end{equation*}
On the other hand, by assumption $(H4')$, we have
\begin{equation*}
\begin{array}{rcl}
\sup B_2&\leq &\|\phi(\cdot,\a^*(\cdot,Du_n(t);t, \m_n))-\phi(\cdot,\a^*(\cdot,Du(t);t, \m))\|_{L^\infty}\\
&\leq & \|\a^*(\cdot,Du_n(t);t, \m_n)-\a^*(\cdot,Du(t);t, \m)\|_{L^\infty}\\
&\leq & \l_1\|Du_n(t)-Du(t)\|_{L^\infty}+\l_0\sup_{[0,t]}\Wass(\m_n(r),\m(r))=o_n(1).
\end{array}
\end{equation*}
Replacing the last two inequalities in \eqref{3giu_pag5}, we get $\Wass(\Psi(\m_n)(t),\Psi(\m)(t))=o_n(1)$ namely, $\Psi(\m_n)\to \Psi(\m)$ in the $C([0,T], \cP(\T^d\times A))$ topology. The proof of point~$(b)$ is achieved.
\end{proof}
\begin{remark}
An uniqueness result similar to that of Theorem \ref{thm:uniqueness} under the hypotheses of this section, in particular (H5), seems more difficult to be obtained. Indeed, in Section~\ref{sec:exist_via_Cont_dep}, the smallness condition played a crucial role in order to obtain a continuous dependence of the joint distribution~$\m$ w.r.t. the data of the third equation as in Lemma~\ref{lemma:fixed_point}. Here, without this property, we cannot have the well-posedness (existence, uniqueness and continuous dependence) for the third equation; hence, we cannot 
apply an argument based on the Gronwall's inequality. At the moment, we are unable to find an alternative proof.
\end{remark}
We also establish an existence result for the ergodic system \eqref{QSS_ergodic}. The proof follows the same arguments of the one of Theorem~\ref{thm:ergodic1} and relies on estimates~\eqref{eq:regularity2} so we shall omit it.
\begin{theorem}\label{thm:ergodic_H5}
Under the same assumptions of Theorem \ref{thm:1pt_fisso}, problem  \eqref{QSS_ergodic} admits a solution $(u,\l,m,\m)$ in $C([0,T],C^2(\T^d))\times C([0,T])\times\cH^1_2(Q)\times C([0,T], \cP(\T^d\times A))$. Moreover, there exists a constant $L$ such that
\begin{equation*} 
\begin{array}{ll}
(i)& \Wass(m(t),m(s))\leq L|t-s|^\half, \quad  \|m\|_{\cH^1_2}\leq L,\\
(ii)& \Wass(\m(t),\m(s))\leq \omega(t-s),\\
(iii) & |\l(t)-\l(s)|+\|u(\cdot,t)-u(\cdot,s)\|_{C^2(\T^d)} \leq \omega(t-s),\\
(iv) &  \textrm{if moreover $m_0\in W^{1,s}(\T^d)$ with $s>2$, then $\|m\|_{\cH^1_s}\leq L$}.
\end{array}		
\end{equation*}
\end{theorem}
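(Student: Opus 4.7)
The plan is to mimic the proof of Theorem~\ref{thm:ergodic1}, passing to the limit as $\rho\to 0^+$ in the family of solutions $(u^\rho,m^\rho,\m^\rho)$ to~\eqref{QSS_discount} provided by Theorem~\ref{thm:1pt_fisso}. Set $w^\rho:=u^\rho-u^\rho(0)$; then $(w^\rho,\rho u^\rho(0),m^\rho,\m^\rho)$ satisfies a system identical to the one written in the proof of Theorem~\ref{thm:ergodic1}, only with the Hamiltonian and coefficients depending on the whole trajectory~$\m^\rho$ as in $(H1')$--$(H4')$. The uniform bounds~\eqref{eq:regularity2}, which are independent of~$\rho$, are the key input.

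First I would establish the time-continuity of $\rho u^\rho(0,\cdot)$. By the comparison principle applied to the HJB equation at times $t$ and $s$, together with Remark~\ref{rmk:H1+H3} adapted to the setting of $(H1')$--$(H3')$, one obtains
\[
\rho\|u^\rho(\cdot,t)-u^\rho(\cdot,s)\|_{L^\infty}\le \bar K\,\max_{x,a}|b(x,a;t,\m^\rho)-b(x,a;s,\m^\rho)|+\max_{x,a}|\ell(x,a;t,\m^\rho)-\ell(x,a;s,\m^\rho)|,
\]
which by $(H3')$ and~\eqref{eq:regularity2}-(ii) is bounded by $L_\mu(|t-s|+\o(|t-s|))=:\tilde\o(|t-s|)$ for a modulus of continuity $\tilde\o$ independent of~$\rho$. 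Together with $\|\rho u^\rho\|_{L^\infty}\le K$, this gives equicontinuity and equiboundedness of $\{\rho u^\rho(0,\cdot)\}_\rho$.

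Next, by Ascoli-Arzelà and the estimates in~\eqref{eq:regularity2}, after extracting a subsequence (not relabeled) one has, as $\rho\to 0^+$:  $\rho u^\rho(0,\cdot)\to \l$ in $C([0,T])$ with modulus of continuity $\tilde\o$; $w^\rho\to u$ in $C([0,T],C^2(\T^d))$ with the same modulus $\o$ on the $C^2$-norm of increments; $m^\rho\to m$ in $C([0,T],\cP(\T^d))$ with the $|t-s|^{1/2}$ bound on $\Wass(m(t),m(s))$; and $\m^\rho\to\m$ in $C([0,T],\cP(\T^d\times A))$ with modulus~$\o$ on $\Wass(\m(t),\m(s))$. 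This yields properties (i)--(iv) of the statement for the limit, while (v) follows from~\eqref{eq:bound_m}.

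Finally, one has to check that the limit $(u,\l,m,\m)$ solves~\eqref{QSS_ergodic}. Stability of viscosity/classical solutions to the stationary HJB equation, combined with the continuity of $H$ in $\m$ (which in the setting $(H1')$--$(H3')$, $(H5)$ is inherited along sequences), yields that $u(\cdot,t)$ solves $-\Delta u+H(x,Du;t,\m)+\l(t)=0$ with $u(0,t)=0$ for every $t\in[0,T]$. For the FP equation, the drift $g^\rho(x,t)=-H_p(x,Dw^\rho;t,\m^\rho)=b(x,\a^*(x,Dw^\rho;t,\m^\rho);t,\m^\rho)$ is equibounded and, by the uniform convergence of $Dw^\rho$ and $\m^\rho$ together with $(H2')$, $(H4')$, converges uniformly to $g(x,t)=b(x,\a^*(x,Du;t,\m);t,\m)$; passing to the limit in the weak formulation of the FP equation yields that $m$ is the unique weak solution of the FP equation in~\eqref{QSS_ergodic}. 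The fixed-point relation is handled exactly as in the last step of the proof of Theorem~\ref{thm:ergodic1}: for every $\phi\in C(\T^d\times A)$,
\[
\int_{\T^d\times A}\phi\,d\m^\rho(t)=\int_{\T^d}\phi(x,\a^*(x,Dw^\rho(x,t);t,\m^\rho))\,m^\rho(x,t)\,dx,
\]
and uniform convergence of the integrand together with the convergence $m^\rho\to m$ in $\cP(\T^d)$ yields the fixed-point identity for $\m$. The main (mild) obstacle is the coherent passage to the limit in the nonlocal-in-time dependence of $H$ on $\m$: here this is precisely the content of $(H5)$, which guarantees the required compactness/continuity of $(x,p,t)\mapsto H(x,p;t,\m^\rho)$ along the sequence.
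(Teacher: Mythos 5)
Your proposal is correct and follows exactly the route the paper intends: the paper omits this proof, stating only that it follows the same arguments as Theorem~\ref{thm:ergodic1} and relies on the $\rho$-independent estimates~\eqref{eq:regularity2}, which is precisely what you carry out (comparison principle plus $(H3')$ for the equicontinuity of $\rho u^\rho(0,\cdot)$, Ascoli--Arzel\`a, and stability in the three equations as $\rho\to0^+$).
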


\section{Examples}\label{sec:comments_examples}
This section contains some examples where the assumptions of previous sections are satisfied. Moreover, we study the particular case where the Hamiltonian depends separately on the joint distribution.

\textbf{Example 1: }
We describe an example of Hamiltonian satisfying $(H4)$. For $A=\{a\in\R^d:\,|a|\le R\}$, let 
\begin{equation*}
b(x,a;\nu)=b_0(x,\nu)-a\qquad\textrm{and}\qquad
\ell(x,a;\nu)=\frac{|a|^2}{2\ell_0(x;\nu)}.
\end{equation*} 
We have
\[
\a^*(x,p;\nu)=
\begin{cases}
\ell_0(x;\nu)p\quad &\textrm{for}\quad|p|\le \dfrac{R}{\ell_0(x;\nu)}\\[10pt]
R\dfrac{p}{|p|}&\textrm{for}\quad|p|>\dfrac{R}{\ell_0(x;\nu)},
\end{cases}
\]
and
\[
H(x,p;\nu)=
\begin{cases}
\ell_0(x;\nu)\dfrac{|p|^2}{2}-b_0(x;\nu)p\quad &\textrm{for}\quad|p|\le \dfrac{R}{\ell_0(x;\nu)}\\[6pt]
R|p|-\dfrac{R^2}{2\ell_0(x;\nu)}-b_0(x;\nu)p&\textrm{for}\quad|p|>\dfrac{R}{\ell_0(x;\nu)}.
\end{cases}
\]
Assume that $\ell_0$, $b_0$ are Lipschitz continuous, bounded and $\ell_0(x;\nu)\ge \delta>0$.  For $|p|\le \dfrac{R}{\ell_0(x;\nu)}$, we have
\[   
|\a^*(x,p;\n_1)-\a^*(x,p;\n_2)|\le \frac{R}{\d}|\ell(x_0;\n_1)-\ell(x_0;\n_2)|\le \frac{RL_0}{\d}\Wass(\n_1,\n_2)
\]
where $L_0$ is the Lipschitz constant of $\ell_0$ with respect to $\nu$. Hence, the condition  \eqref{hyp:key_assumption2} is satisfied for $\l_0:=RL_0/\d<1$.

\medskip
\textbf{Example 2: }
We now provide an example of Hamiltonian as in~\eqref{4agosto} which fulfills assumptions~$(H3')$ and~$(H5)$ and whose coefficients only depend on the past evolution of the joint distribution. Here, ${\mathcal M}(\T^d \times A)$ stands for the set of non negative Borel measures on $\T^d\times A$ endowed with the distance
\begin{equation*}
\Wassgen(\m,\n)=\sup_\phi \int_{\T^d\times A}\left(\phi(x,\a)-\phi(0,0)\right)\left(\m-\n\right)(dx,d\a)
\end{equation*}
where the supremum is performed on the set of $1$-Lipschitz continuous functions. Note that, for $\m,\n\in\cP(\T^d\times A)$, $\Wassgen(\m,\n)$ coincides with~$\Wass(\m,\n)$.
Fix a $\R$-valued non negative function~$K\in C([0,T])$. For $\n\in C([0,T];\cP(\T^d\times A))$, we set
\begin{equation*}
[\n](s)=\int_0^sK(\tau)\n(\tau)\, d\tau \qquad \forall s\in[0,T];
\end{equation*}
in other words: $[\n](s)\in {\mathcal M}(\T^d \times A)$ with $[\n](s)(Y)=\int_0^sK(\tau)\n(\tau)(Y)\, d\tau$ for any Borel set $Y\subset \T^d\times A$.\\
Let $\tilde b$ and $\tilde \ell$ be two functions defined on $\T^d\times A\times \cM(\T^d\times A)$ with values in~$\R^d$ and respectively in~$\R$, which satisfy $(H1)$ with $\nu \in {\mathcal M}(\T^d \times A)$ and
\begin{equation*}
|\tilde b(x,a;\m)-\tilde b(x,a;\n)|+|\tilde \ell(x,a;\m)-\tilde \ell(x,a;\n)|\leq L \Wassgen(\m,\n) 
\end{equation*}
for any $x\in\T^d$, $a\in A$, $\m,\n\in \cM(\T^d \times A)$;
we introduce
\begin{equation*}
b(x,a;t,\n):=\tilde b(x,a;[\n](t)),\qquad \ell(x,a;t,\n):=\tilde \ell(x,a;[\n](t)).
\end{equation*}
Let us show that assumption~$(H5)$ is fulfilled. To this end, consider a sequence $\{\m_n\}_n$, with $\m_n\in C([0,T];\cP(\T^d\times A))$; we want to prove that the sequence $\{H_n\}_n$, with $H_n(x,p,t):=H(x,p;t,\m_n)$, is sequentially compact on $\T^d\times \overline{B(0,R)}\times[0,T]$. To this end we observe that, by~$(H1)$, the $H_n$'s are uniformly bounded on $\T^d\times \overline{B(0,R)}\times[0,T]$. On the other hand, again by $(H1)$, we have
\[
|H_n(x_1,p_1,t)-H_n(x_2,p_2,t)|\leq K|p_1-p_2|+L(R+1)|x_1-x_2|
\]
for every $(x_1,p_1,t), (x_2,p_2,t)\in \T^d\times \overline{B(0,R)}\times[0,T]$ and $n\in \N$. Moreover, we have
\[
|H_n(x,p,t_1)-H_n(x,p,t_2)|\leq C\Wassgen([\m_n](t_1),[\m_n](t_2))
\]
where $C$ is a suitable constant. We observe
\begin{align*}
&\Wassgen([\m_n](t_1),[\m_n](t_2))\\
&\qquad  =\sup_\phi\int_{t_2}^{t_1}K(\tau) \left\{\int_{\T^d\times A}\left(\phi(x,\a)-\phi(0,0)\right)\m_n(\tau)(dx,d\a)\right\}\, d\tau\\
&\qquad \leq \|K\|_\infty \|\phi(\cdot,\cdot)-\phi(0,0)\|_{L^\infty}|{t_2}-{t_1}|\leq C' |{t_2}-{t_1}|
\end{align*}
for a constant~$C'$ depending only on~$K$, $d$ and the diameter of~$A$.
By the last three inequalities, the functions $H_n$ are also uniformly continuous on $\T^d\times \overline{B(0,R)}\times[0,T]$. By Ascoli-Arzela theorem we conclude that the sequence~$\{H_n\}$ is sequentially compact; hence, assumption~$(H5)$ is fulfilled.\\
Assumption~$(H3')$ is also verified; the proof is similar to the above arguments so we shall omit it.

\paragraph{Separated dependence on the joint distribution.} 
We assume $(H1)$, $(H2)$, $(H3)$ and 
\begin{equation}\label{separate_cost}
b=b(x,a)\qquad\textrm{and}\qquad\ell(x,a;\m)=\ell_0(x,a)-\ell_1(\m).
\end{equation}
Now the Hamiltonian reads
\begin{equation*}
H(x,p;\mu)=\sup_{a\in A}\left\{ -p\cdot b(x,a)-\ell_0(x,a)\right\}+\ell_1(\m)=:H_0(x,p)+\ell_1(\m).
\end{equation*}
In this setting, systems~\eqref{QSS_discount} and~\eqref{QSS_ergodic} almost decouple; indeed, the HJB equation is independent of the other two equations (more precisely, $Du(t)$ is independent of $\m(t)$) and one has only to solve the systems of the last two equations. This property permits to establish the existence of a solution of the two systems~\eqref{QSS_discount} and~\eqref{QSS_ergodic} without requiring neither smallness of the constants (as in Section~\ref{sec:exist_via_Cont_dep}) nor the stability property~$(H5)$ (as in Section~\ref{sec:exis_via_reg_dep}). To this end, we start with a simple, but useful, observation on the HJB equation.
\begin{remark}\label{prp:5.1}
For $\n_1,\n_2\in \cP(\T^d\times A)$, let $u^\r_i$ solve problem~\eqref{eq:HJB_bound_discount} with $\n$ replaced by~$\n_i$, for $i=1,2$. Then,
\[
u^\r_1(x)=u^\r_2(x) +(\ell_1(\n_1)-\ell_1(\n_2))/\r \qquad \forall x\in\T^d. 
\]
Moreover, let $u_i$ solve~\eqref{eq:HJB_bound_ergodic} with $\n$ replaced by~$\n_i$, for $i=1,2$. Then, $u_1=u_2$.\\
Actually, uniqueness result for~\eqref{eq:HJB_bound_discount} yields $u^\r_1(\cdot)=u^\r_2(\cdot) +(\ell_1(\n_1)-\ell_1(\n_2))/\r$. In particular, $u^\r_1(\cdot)-u^\r_1(0)=u^\r_2(\cdot)-u^\r_1(0)$. Letting $\r\to0$, we get $u_1=u_2$.
\end{remark}
\begin{proposition}\label{thm:separated}
The results of Theorem~\ref{thm:1pt_fisso} and of Theorem~\ref{thm:ergodic_H5} hold true.
\end{proposition}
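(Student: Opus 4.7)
The key point is that under \eqref{separate_cost} the HJB equation decouples completely from the other two. Since $b$ and $\ell_0$ do not depend on $\m$ and the $\ell_1(\m)$-term in $H$ is independent of $a$, the maximizer $\a^*(x,p)$ is independent of $\m$; consequently $H_p(x,p;\m)=-b(x,\a^*(x,p))$ is also independent of $\m$. First I would pick any reference measure $\n_0\in\cP(\T^d\times A)$ and let $u_0^\r$ be the unique classical solution of $-\Delta u_0^\r+H_0(x,Du_0^\r)+\ell_1(\n_0)+\r u_0^\r=0$ given by Lemma~\ref{lemma:bounds_discount_ergodic}. By Remark~\ref{prp:5.1}, for any $\m\in\cP(\T^d\times A)$, the solution $u^\r$ of the HJB equation with datum $\m$ differs from $u_0^\r$ only by the additive constant $(\ell_1(\n_0)-\ell_1(\m))/\r$; in particular $Du^\r\equiv Du_0^\r$ is a fixed $C^{1,\th}$ function on $\T^d$, independent of $t$ and of $\m$.

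Once $Du_0^\r$ is in hand, the drift appearing in the Fokker-Planck equation in~\eqref{QSS_discount}, namely $g(x):=-b(x,\a^*(x,Du_0^\r(x)))$, is a fixed bounded continuous function of~$x$ alone. The FP equation is therefore a \emph{genuine linear} parabolic equation and Lemma~\ref{lemma:linear_FP} produces a unique non-negative $m\in\cH^1_2(Q)$ (resp. $\cH^1_s(Q)$ if $m_0\in W^{1,s}$) satisfying $\Wass(m(t),m(s))\le c_0|t-s|^\half$. I would then \emph{define} $\m(t):=F\sharp m(t)$ where $F:\T^d\to\T^d\times A$ is the fixed continuous map $F(x):=(x,\a^*(x,Du_0^\r(x)))$. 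By construction the triple $(u^\r,m,\m)$ solves \eqref{QSS_discount}; no fixed-point argument is needed.

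To verify the regularity list in~\eqref{eq:regularity2}: item (i) and item (v) follow directly from Lemma~\ref{lemma:linear_FP}; item (iv) is Lemma~\ref{lemma:bounds_discount_ergodic} applied to $u_0^\r$ (recalling $u^\r(\cdot,t)-u^\r(0,t)=u_0^\r(\cdot)-u_0^\r(0)$); item (iii) is in fact trivial with $\o\equiv 0$ because $u^\r(\cdot,t)-u^\r(0,t)$ is independent of $t$. For (ii), since $F$ is continuous on the compact set $\T^d$, it is uniformly continuous with some modulus $\o_F$; then for any $1$-Lipschitz $\phi:\T^d\times A\to\R$, the function $\phi\circ F$ is continuous on $\T^d$ with modulus $\o_F$, and a standard approximation argument (e.g. convolving $\phi\circ F$ with a smooth kernel of parameter $\e$ to produce a Lipschitz function with controlled sup-norm error, then optimising in $\e$) gives a modulus $\o$ such that
\[
\Wass(\m(t),\m(s))=\sup_{\phi}\int_{\T^d}(\phi\circ F)\,d(m(t)-m(s))\le \o\!\left(\Wass(m(t),m(s))\right)\le\o(c_0|t-s|^{1/2}).
\]

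For the ergodic counterpart (Theorem~\ref{thm:ergodic_H5}) the same decoupling works: let $(\bar u,\l_0)$ be the unique solution (Lemma~\ref{lemma:bounds_discount_ergodic}(ii)) of $-\Delta \bar u+H_0(x,D\bar u)+\l_0=0$ with $\bar u(0)=0$. Then by Remark~\ref{prp:5.1} the pair $(u(\cdot,t),\l(t)):=(\bar u(\cdot),\,\l_0-\ell_1(\m(t)))$ solves the ergodic HJB in~\eqref{QSS_ergodic} for every $t$; $u$ is independent of~$t$, and the modulus of $\l$ is controlled by $L_\mu$ times the modulus of $\m$ via $(H3)$. The FP-step and the fixed-point definition of $\m$ are exactly as above. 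I do not expect any serious obstacle here: the only mildly delicate point is the modulus argument for $(\m(t))_t$ in (ii), since without $(H4)$ the map $F$ is merely continuous and one cannot invoke a Lipschitz push-forward estimate; that is overcome by the uniform-continuity argument sketched above, which relies only on the compactness of $\T^d$.
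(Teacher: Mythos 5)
Your construction is correct, and it takes a more direct route than the paper. The paper's own proof is very short: it observes (via Remark~\ref{prp:5.1}) that the estimate~\eqref{est_grad} of Lemma~\ref{lemma:new} holds with $\o\equiv 0$, and then re-runs, ``with some easy adaptations'', the Schauder fixed-point scheme of Theorem~\ref{thm:1pt_fisso} (the map $\Psi$ on the set $\cK$) and the limiting argument of Theorem~\ref{thm:ergodic_H5}. You instead exploit the decoupling to its full extent and build the solution explicitly and sequentially -- fixed gradient $Du_0^\r$ from the HJB equation, then $m$ from the linear FP equation, then $\m(t):=F\sharp m(t)$, then the $t$-dependent additive constant (resp.\ $\l(t)=\l_0-\ell_1(\m(t))$ in the ergodic case) -- so that no fixed-point theorem is needed at all; this is more elementary and also makes transparent why neither the smallness condition~\eqref{hyp:smallness_Lipschitz} nor $(H4)$/$(H5)$ is required. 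The one genuinely delicate point, which you correctly identify and resolve, is item~(ii) of~\eqref{eq:regularity2}: without $(H4)$ the map $x\mapsto\a^*(x,Du_0^\r(x))$ is only (uniformly) continuous, so the Lipschitz push-forward estimate used in the proof of Theorem~\ref{thm:1pt_fisso} (which relies on $(H4')$) is unavailable, and your Lipschitz-regularization/mollification bound $\Wass(\m(t),\m(s))\le\o(\Wass(m(t),m(s)))$ is exactly the kind of ``easy adaptation'' the paper leaves implicit. To fully match the statement of Theorem~\ref{thm:1pt_fisso}, you should add one line noting that this modulus is independent of~$\r$: this follows because $\|Du_0^\r\|_{C^{1,\th}}\le\bar K$ uniformly in $\r$ by Lemma~\ref{lemma:bounds_discount_ergodic}, so the modulus of continuity of $x\mapsto\a^*(x,Du_0^\r(x))$ is controlled by the (uniform) modulus of $\a^*$ on the compact set $\T^d\times\overline{B(0,\bar K)}$; with that remark, both the discounted and the ergodic statements are fully proved by your argument (and in the ergodic case you even avoid the limit $\r\to0^+$ by solving the ergodic cell problem directly).
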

\begin{proof}
By Remark~\ref{prp:5.1}, the results in Lemma~\ref{lemma:new} are verified; more precisely, the estimate~\eqref{est_grad} is fulfilled with~$\o=0$. The rest of the proof follows, with some easy adaptations, the ones of Theorem~\ref{thm:1pt_fisso} and of Theorem~\ref{thm:ergodic_H5}.
\end{proof}

\appendix
\section{Appendix: proofs of some results}\label{sec:appendix}
\subsection{Proof of Lemma \ref{lemma:dip_continua}}
\begin{proof}
Set $b^i(x,a)=b(x,a;\n_i)$ and $\ell^i(x,a)=\ell(x,a;\n_i)$, $i=1,2$. We first claim that
\begin{equation}\label{eq:dc_1}
\r\|u^\r_1-u^\r_2\|_{L^\infty}\le  C  \max_{x,a}|b^1(x,a)-b^2(x,a)|+\max_{x,a}|\ell^1(x,a)-\ell^2(x,a)|,
\end{equation}
where $C=C_1(1+K+L)$ as in \eqref{eq:bound_2}. Indeed, to prove  the claim, it is sufficient to observe that 
\[v^\r_{\pm}(x)= u^\r_2(x)\pm \r^{-1}\left( C\max_{x,a}|b^1(x,a)-b^2(x,a)|+\max_{x,a}|\ell^1(x,a)-\ell^2(x,a)|\right) \]
are a subsolution and a supersolution of the equation satisfied by $u^\r_1$. Then estimate~\eqref{eq:dc_1} follows by the  comparison principle. The rest of the proof follows the corresponding argument in \cite[Thm. 2.2]{marchi}.
\end{proof}

\subsection{Proof of Lemma \ref{lemma:fixed_point}}
\begin{proof}
Given $m$ and $p$ as in the statement, let $\Psi:\cP(\T^d\times A)\to\cP(\T^d\times A)$ be the map defined by $\Psi(\m)=\left( \Id,\a^*(\cdot,p(\cdot);\m)\right) \sharp m$. Given $\m_1,\m_2 \in \cP(\T^d\times A)$, by  \eqref{hyp:key_assumption2}, we have
\begin{align*}
&\Wass(\Psi(\m_1),\Psi(\m_2))= \sup_\phi\left\{\int_{\T^d\times A}	\phi(x,a)d(\Psi(\m_1) -\Psi(\m_2))\right\}\\
&=\sup_\phi\left\{ \int_{\T^d}\Big(\phi(x,\a^*(x,p(x) ; \m_1 ))- \phi(x,\a^*(x,p(x); \m_2))\Big)m(x)dx\right\}\\
&\le\|\a^*(\cdot,p(\cdot) ; \m_1 ) -\a^*(\cdot,p (\cdot); \m_2 )\|_{L^\infty}\le  \l_0\Wass(\m_1,\m_2),
\end{align*}
where the $\sup$ here  and in the following formulas is taken with respect to $1$-Lipschitz functions on $\T^d\times A$.
Hence, by $(H4)$, $\Psi$ is a contraction and therefore there exists a unique fixed point to \eqref{eq:fixed_1}.\\
$(i)$. By~\eqref{hyp:key_assumption2}, we have
\begin{align*}
\Wass(\m_n,\m)&=\sup_\phi\left\{ \int_{\T^d}\left[\phi(x,\a^*(x,p_n(x) ; \m_n))- \phi(x,\a^*(x,p(x); \m))\right]m(x)dx\right\}\\
&\le\int_{\T^d}\l_1|p_n(x)-p(x)|m(x)dx+\l_0\Wass(\m_n,\m).
\end{align*} 
We deduce
\begin{equation}\label{eq:estimate_wass_m}
\Wass(\m_n,\m)\leq (1-\l_0)^{-1}\l_1\int_{\T^d}|p_n(x)-p(x)|m(x)dx
\end{equation}
and the statement is an easy consequence of  the Cauchy-Schwarz inequality applied to~\eqref{eq:estimate_wass_m}.\\
$(ii)$. Consider $m_n$ and $m$, $\m_n$ and $\m$ as in the statement. We have
\begin{align*}
&\Wass(\m_n,\m)= \sup_\phi\left\{\int_{\T^d\times A}\phi(x,a)d(\m_n-\m)\right\}\\&
=\sup_\phi\left\{ \int_{\T^d}\left[\phi(x,\a^*(x,p(x) ; \m_n))m_n(x)- \phi(x,\a^*(x,p(x); \m))m(x)\right]dx\right\}\\
&\le \sup_\phi\left\{ \int_{\T^d}\left[\phi(x,\a^*(x,p(x) ; \m_n))- \phi(x,\a^*(x,p(x); \m))\right]m_n(x) dx\right\}
\\&\qquad +\sup_\phi\left\{ \int_{\T^d}\phi(x,\a^*(x,p(x); \m))(m_n(x)-m(x))dx\right\}.
\end{align*}
We denote by $I_1$ and $I_2$ the two terms in the last side. Assumption~\eqref{hyp:key_assumption2} ensures
\[
I_1\leq \l_0\Wass(\m_n,\m).
\]
On the other hand, by~\eqref{hyp:key_assumption2}, the function $\a^*(\cdot,p(\cdot);\m)$ is Lipschitz continuous with Lipschitz constant $1+\l_1(1+ L_p)$. We deduce
\begin{eqnarray*}
I_2&=&(1+\l_1+\l_1L_p)\sup_\phi\left\{ \int_{\T^d}\frac{\phi(x,\a^*(x,p(x); \m))}{1+(\l_1+\l_1L_p)}(m_n(x)-m(x))dx\right\}\\
&\leq&  [1+\l_1(1+L_p)]\Wass (m_n,m),
\end{eqnarray*}
where  the last relation is due to the fact that the integrand is a $1$-Lipschitz continuous function in~$x$. Replacing the last two relations in the previous one we obtain the statement.
\end{proof}

\subsection{Proof of Lemma \ref{lemma:new}}
\begin{proof}
We shall borrow some arguments of~\cite[Lemma 5.4]{Card_Lehalle}; we proceed by contradiction assuming that there exists $\e>0$ such that for every $n\in\N\setminus\{0\}$ there exist $\m_n\in C([0,T],\cP(\T^d\times A))$, $\r_n\in(0,1)$ and $t_n\in [0,T-h_n]$, with $h_n\in(0,1/n)$ and
\begin{equation}\label{claim54}
\|Dv_n-Dw_n\|_{L^\infty}\geq\e
\end{equation}
where $v_n$ and $w_n$ are the solutions to~\eqref{HJB_discount} with $\r$ replaced by $\r_n$ and with $(t,\m)$ replaced by $(t_n,\m_n)$ and respectively by $(t_n+h_n,\m_n)$.
Possibly passing to a subsequence, we may assume that the sequence $\{\r_n\}$ converges to some value $\r\in[0,1]$. We split the rest of the proof according to the fact that $\r=0$ or $\r\ne 0$.\\
Case $\r\ne0$.
Estimates~\eqref{eq:bound_1} and~\eqref{eq:bound_2}
and $\r>0$ ensure:
\begin{equation}\label{A9bis}\|v_n\|_{C^{2,\theta}(\T^d)},\|w_n\|_{C^{2,\theta}(\T^d)}\leq \bar K.
\end{equation}
Hence, possibly passing to subsequences (that we still denote $v_n$ and $w_n$), we may assume that $v_n$ and $w_n$ converge to some function $\varphi_v$ and respectively $\varphi_w$ in the topology of $C^{1}(\T^d)$.
	
Since $h_n\to 0$ as $n\to\infty$, possibly passing to a subsequence and without any loss of generality, we assume that both $\{t_n\}_n$ and $\{t_n+h_n\}_n$ converge to a common value~$\bar t$.
By hypothesis $(H5)$, there exists $\bar H(x,p,t)$ such that, as $n\to\infty$, $H(\cdot,\cdot;\cdot, \m_n)$ uniformly converges to $\bar H$ in $\T^d\times B(0,\bar K)\times [0,T]$.
In particular, by Ascoli-Arzela theorem, we deduce that $\bar H$ is continuous on $\T^d\times B(0,\bar K)\times [0,T]$ and, exploiting $(H1')$, also that it satisfies
\begin{equation*}
|\bar H(x_1,p_1,t)-\bar H(x_2,p_2,t)|\leq L \bar K|x_1-x_2|+K|p_1-p_2|
\end{equation*}
for any $x_1,x_2\in\T^d$, $p_1,p_2\in B_{\bar K}$ and $t\in[0,T]$.
We denote by $\bar u$ the unique bounded solution to 
\begin{equation}\label{HJB_bar}
- \D \bar u + \bar H(x,D\bar u,\bar t)+\r \bar u =0 \quad \mbox{ in } \T^d.
\end{equation}
By  $(H5)$ and the continuity of $\bar H$, for some sequence $o_n(1)$ with $\lim_{n}o_n(1)=0$, there holds
\begin{multline}\label{A7bis}
|H(x,p;t_n,\m_n)-\bar H(x,p,\bar t)|\leq |H(x,p;t_n,\m_n)-\bar H(x,p, t_n)|\\+|\bar H(x,p,t_n)-\bar H(x,p,\bar t)|\leq o_n(1)
\end{multline}
for  every $(x,p)\in\T^d\times B_{\bar K}$.
By the Comparison Principle (using the positivity of~$\r$, the bound~\eqref{A9bis}, and the last inequality), we deduce that $v_n\pm o_n(1)$ are super- and subsolution to~\eqref{HJB_bar}. Letting $n\to\infty$, by uniqueness of the solution to~\eqref{HJB_bar}, we obtain $\varphi_v=\bar u$. Repeating the same arguments for $w_n$, we obtain $\varphi_w=\bar u=\varphi_v$. In conclusion, as $n\to\infty$, we have
\[
\|Dv_n(\cdot)-Dw_n(\cdot)\|_{L^\infty}\leq \|Dv_n(\cdot)-D\bar u(\cdot)\|_{L^\infty}+\|Dw_n(\cdot)-D\bar u(\cdot)\|_{L^\infty} \to 0
\]
which contradicts our claim~\eqref{claim54}.

Case $\r=0$. We introduce the functions $v^*_n(\cdot):=v_n(\cdot)-v_n(0)$ and $w^*_n(\cdot):=w_n(\cdot)-w_n(0)$.
Again by estimates~\eqref{eq:bound_1} and~\eqref{eq:bound_2}, we infer
\begin{equation*}
\|v^*_n\|_{C^{2,\theta}(\T^d)},\|w^*_n\|_{C^{2,\theta}(\T^d)}\leq \bar K\qquad\textrm{and}\qquad
|\r_nv_n(0)|, |\r_nw_n(0)|\leq K.
\end{equation*}
By Ascoli-Arzela theorem, possibly passing to a subsequence, there exist a function $V\in C^{2,\theta}(\T^d)$ and a constant $\l_v$ such that
\[
\lim_{n\to\infty}\|v^*_n-V\|_{C^{2}(\T^d)}=0\qquad \textrm{and }
\lim_{n\to\infty}\r_nv_n(0)=\l_v.
\]
By the same arguments as before, relation~\eqref{A7bis} still holds true. Hence, by stability result, we deduce that the function~$V$ solves
\begin{equation}\label{A10}
\l_v-\Delta V+\bar H(x,DV,\bar t)=0,\qquad V(0)=0.
\end{equation}
By similar arguments, there exist a function $W\in C^{2,\theta}(\T^d)$ and a constant $\l_w$ such that
\[
\lim_{n\to\infty}\|w^*_n-W\|_{C^{2}(\T^d)}=0\qquad \textrm{and }
\lim_{n\to\infty}\r_nw_n(0)=\l_w
\]
and consequently
\begin{equation}\label{A10bis}
\l_w-\Delta W+\bar H(x,DW,\bar t)=0,\qquad W(0)=0.
\end{equation}
By~\eqref{A10} and~\eqref{A10bis}, the couples~$(\l_v,V)$ and~$(\l_w,W)$ are both solution to the ergodic problem
\[
\l-\Delta u+\bar H(x,Du,\bar t)=0,\qquad u(0)=0.
\]
By the same arguments as those used in the proof of~\cite[Thm4.1]{AB}, this ergodic problem admits exactly one solution $(\l,u)\in\R\times C(\T^d)$; hence, we have
\[
\l_v=\l_w\qquad \textrm{and }V =W.
\]
Finally, as $n\to \infty$, we conclude
\[
\|Dv_n-Dw_n\|_{L^\infty}=\|Dv^*_n-Dw^*_n\|_{L^\infty}\leq \|Dv^*_n-DV\|_{L^\infty}+\|Dw^*_n-DV\|_{L^\infty}\to 0
\]
which contradicts our claim~\eqref{claim54}.
\end{proof}

\paragraph*{Acknowledgments.} The authors were partially supported by INdAM-GNAMPA Project, codice CUP\textunderscore{E55F22000270001}. The second author was partially supported also by the Fondazione CaRiPaRo Project ``Nonlinear Partial Differential Equations: Asymptotic Problems and Mean-Field Games''.\\
The authors warmly thank the anonymous referees for useful suggestions and comments to improve the paper.


\end{document}